\crefname{hypothesis}{Hypothesis}{Hypotheses}
\title{A Multilevel Approach to Variance Reduction in the Stochastic Estimation of the Trace of a Matrix\thanks{Submitted to the editors DATE.
\funding{This work was funded by the European Commission within the STIMULATE project. }}}
\author{Andreas Frommer\thanks{University of Wuppertal, Department of Mathematics, Gauss-Strasse 20, 42097 Wuppertal, Germany (\email{frommer@uni-wuppertal.de}, \email{m.nasr@stimulate-ejd.eu}, \email{g.ramirez@stimulate-ejd.eu} ).}
\and Mostafa Nasr Khalil\footnotemark[2]
\and Gustavo Ramirez-Hidalgo\footnotemark[2]}
\DeclareMathOperator{\diag}{diag}
\newcommand*{\addFileDependency}[1]{
  \typeout{(#1)}
  \@addtofilelist{#1}
  \IfFileExists{#1}{}{\typeout{No file #1.}}
}
\newcommand{\mathC}{\mathbb{C}}
\newcommand{\mF}{\mathcal{F}}
\renewcommand{\P}{\mathbb{P}}
\newcommand{\E}{\ensuremath{\mathbb{E}}}
\newcommand{\V}{\ensuremath{\mathbb{V}}}
\newcommand{\tr}{\ensuremath{\mathrm{tr}}}
\newcommand{\C}{\mathbb{C}}
\renewcommand{\i}{\ensuremath{\mathrm{i}}}
\newcommand{\defl}{\ensuremath{\mathrm{defl}}}
\newcommand{\nnz}{\ensuremath{\mathrm{nnz}}}
\newcommand{\offdiag}{\ensuremath{\mathrm{offdiag}}}
\newcommand{\comment}[1]{}
\begin{document}

\maketitle

\begin{abstract}
The trace of a matrix function $f(A)$, most notably of the matrix inverse, can be estimated stochastically using samples $x^*f(A)x$ if the components of the random vectors $x$ obey an appropriate probability distribution. 
However such a Monte-Carlo sampling suffers from the fact that the accuracy depends quadratically of the samples to use, thus making higher precision estimation very costly. In this paper we suggest and investigate a multilevel Monte-Carlo approach which uses a multigrid hierarchy to stochastically estimate the trace. This results in a substantial reduction of the variance, so that higher precision can be obtained at much less effort. We illustrate this for the trace of the inverse using three different classes of matrices. 
\end{abstract}

\begin{keywords}
 Monte-Carlo, multilevel Monte-Carlo, trace, Hutchinson's method, sparse matrices  
 \end{keywords}

\begin{AMS}
  65C05, 65F10, 65F60, 65N55
\end{AMS}

\section{Introduction}



We consider the situation where one is interested in 
the trace $\tr(f(A))$ of a matrix function $f(A)$. Here, $f(A) \in \mathC^{n \times n}$ is the matrix obtained from 
$A \in \mathC^ {n \times n}$ and the function $f: z \in D\subseteq \mathC \to f(z) \in \mathC$ in the usual operator theoretic sense; see \cite{Higham2008}, e.g. Our focus is on the inverse $A^{-1}$, i.e.\ $f(z) = z^{-1}$. Computing the trace is an 
important task arising in many applications. The trace of the inverse is required, for example, in the study of 
fractals \cite{Sapoval1991}, in generalized cross-validation and its applications 
\cite{GolubMatt1995,GolubHeathWahba1979}. In network analysis, the Estrada index---a total centrality measure for networks---is defined as the trace of the exponential of the adjacency matrix $A$ of a graph 
\cite{EstradaHigham2010,Ginosar2008} and an analogous measure is given by the trace of the resolvent  $(\rho I-A)^{-1}$ \cite[Section 8.1]{Estrada2011}. For Hermitian positive definite matrices $A$, one can compute the 
log-determinant $\log(\det(A))$ as the trace of the logarithm of $A$. The log-determinant is needed in machine 
learning and related fields \cite{Rasmussen2005,Rue2005}. Further applications are discussed in 
\cite{meyer2021hutch++,SaadUbaru2017,SaadUbaruJie2017}. A particular application which we want to prepare with this work is
in lattice quantum chromodynamics (QCD), a computational approach in Theoretical Physics to simulate the interaction of the quarks as constituents of matter. Here, the trace of the inverse of the discretized Dirac operator yields the disconnected fermion loop contribution to an observable; see \cite{SextonWeingarten1994}. As  simulation methods get more and more precise, these contributions become increasingly important. 

It is usually unfeasible to compute the diagonal entries $f(A)_{ii}$ directly as $e_i^*f(A)e_i$, $e_i$
the $i$th canonical unit vector, and then obtain the trace by summation. For example, for the inverse this would mean that we have to solve $n$ linear systems, which is prohibitive for large values of $n$. 

One large class of methods which aims at circumventing this cost barrier are deterministic approximation techniques. Probing methods, for example, approximate
\begin{equation} \label{eq:probing}
\tr(f(A)) \approx \sum_{i=1}^N w_i^* f(A) w_i,
\end{equation}
where the vectors $w_i$ are carefully chosen sums of canonical unit vectors and $N$ is not too large. Various approaches have been suggested and explored in order to keep $N$ small while at the same time achieving good accuracy in \eqref{eq:probing}. This includes approaches based on graph colorings; see \cite{BekasKokiopoulouSaad2007,Endress:2014qpa,TangSaad2012} e.g., and the hierarchical probing techniques from \cite{Stathopoulos2013,LaeuchliStathopoulos2020}. In order for probing to yield good results, the matrix $f(A)$ should expose a decay of the moduli of its entries when we move away from the diagonal, since the sizes of the entries farther away from the diagonal determine the accuracy of the approximation. Recent theoretical results in this direction were given in \cite{FrSchiSchw2020}.  Lanczos techniques represent another deterministic approximation approach and are investigated  \cite{Bentbib2021,JieSaad2018,Lin2016}, e.g. Without giving details let us just mention that in order to improve their accuracy, deterministic approximation techniques can be combined with the stochastic techniques to be presented in the sequel; see \cite{SaadUbaruJie2017}, e.g.

In this paper we deal with the other big class of methods which aim at breaking the cost barrier using {\em stochastic estimation}.  In principle, they work for any matrix and, for example, do not require a decay away from the diagonal. Our goal is to develop a multilevel Monte-Carlo method to estimate $\tr(f(A))$ stochastically. Our approach can be regarded as a variance reduction technique applied to the classical stochastic ``Hutchinson'' estimator \cite{Hutchinson90}
\begin{equation} \label{hutch:eq}
\tr(f(A)) \approx \frac{1}{N} \sum_{n=1}^N x^{(n)} f(A) x^{(n)},
\end{equation}
where the components of the random vectors $x^{(n)} $ obey an appropriate probability distribution. The variance of the estimator \eqref{hutch:eq} decreases only like $\frac{1}{N}$, which makes the method too costly when higher precisions are to be achieved. The multilevel approach aims at curing this by working with representations of $A$ at different levels. On the higher numbered levels, evaluating $f(A)$ becomes increasingly cheap, while on the lower levels, which are more costly to evaluate, the variance is small.

This paper is organized as follows: In Section~\ref{mlmc:sec} we recall the general 
framework of multilevel Monte-Carlo estimators. In 
Section~\ref{trace_est:sec} we discuss Hutchinson's method for stochastically estimating the trace before turning to our new multilevel approach in Section~\ref{mlmc_trace:est}. This 
section also contains a comparison to known approaches based on deflation as a motivation of why the new multilevel method should provide additional efficiency. Finally, several numerical results are presented in Section~\ref{numerical_results:sec}.

\section{Multilevel Monte-Carlo} \label{mlmc:sec}
We discuss the basics of the multilevel Monte-Carlo approach as a variance reduction technique. We place ourselves in a general setting, thereby closely following \cite{Giles2015}.

Assume that we are given a probability space $(\Omega, \mF, \P)$ with sample space $\Omega$, sigma-algebra
$\mF \subseteq \Omega$ and probability measure $\P: \mF \to [0,1]$. For a given random variable 
$f: \Omega \to \C$ , the {\em standard Monte-Carlo} approach estimates its expected value $\E[f]$ as the arithmetic mean
\begin{equation}  \label{stdMC:eq}
\E[f] \approx \frac{1}{N} \sum_{n=1}^N f(\omega^{(n)}),
\end{equation}
where the $\omega^{(i)}$ are independent events coming from $(\Omega, \mF, \P)$. 
The variance of this estimator is $\frac{1}{N} \V[f]$, so the root mean square  deviation has order $\mathcal{O}(N^{-1/2})$. This indicates that the number $N$ of events has to increase quadratically with the accuracy required which is why, typically, higher accuracies require very high computational effort in this type of Monte-Carlo estimation.

The idea of {\em multilevel Monte-Carlo} is to split the random variable $f$ as a sum 
\begin{equation} \label{multilevel_decomposition:eq}
f = \sum_{\ell=1}^L g_{\ell},
\end{equation}
where the random variables $g_{\ell}:\Omega \to \C$ are regarded as contributions ``at level $\ell$'' to $f$. This gives 
\[
\E[f] =  \sum_{\ell=1}^L \E[g_{\ell}],
\]
and an unbiased estimator for $\E[f]$ is obtained as 
\[
\E[f] \approx  \sum_{\ell=1}^L \frac{1}{N_\ell} \sum_{n=1}^{N_\ell}   g_{\ell}(\omega^{n,\ell}),
\]
where the $\omega^{(n,\ell)}$ denote the independent events on each level. 
The variance of this estimator is 
\[
\sum_{\ell=1}^{L} \frac{1}{N_\ell} \V[g_\ell].
\]
The idea is that we are able to find a multilevel decomposition of the form \eqref{multilevel_decomposition:eq} in which the cost $C_\ell$ to evaluate $g_\ell$ is low when the variance $V_\ell := \V[g_\ell]$ is high  and vice versa. 
As is explained in \cite{Giles2015}, the solution 
to the minimization problem which minimizes the total cost subject to achieving a given target variance $\epsilon^2$
\[
\sum_{l=1}^L N_\ell C_\ell \to \min! \enspace \mbox{s.t. }  \sum_{\ell=1}^{L} \frac{1}{N_\ell} V_\ell = \epsilon^2
\]
gives $N_\ell = \mu \sqrt{V_\ell/C_\ell}$. Here, the Lagrangian multiplier $\mu$ satisfies $\displaystyle \mu  =  \epsilon^{-2} \sum_{\ell=1}^L \sqrt{V_\ell/C_\ell}$, and the corresponding minimal total cost is
\[
C = \epsilon^{-2} \left( \sum_{\ell = 1}^L \sqrt{V_\ell C_\ell} \right)^2.
\]

The typical situation is that the contributions $g_\ell$ on level $\ell$ are given as
differences $f_\ell - f_{\ell+1}$ of approximations $f_\ell$ to $f$ on the various levels, i.e.\ we have
\begin{equation} \label{mlmc_representation:eq}
f = \sum_{\ell=1}^{L-1} \underbrace{\left(f_{\ell}-f_{\ell+1}\right)}_{=g_{\ell}} + \underbrace{f_L}_{=g_L} \quad \text{with } f_1 = f.
\end{equation}
 If we assume that the cost $\hat{C}_\ell$ to evaluate $f_\ell$ decreases rapidly with the level $\ell$, the cost ${C}_\ell$ for evaluating the differences $g_\ell = f_\ell-f_{\ell+1}$ is well approximated by $\hat{C}_\ell$. The 
 ratio of the total cost encountered when reducing the variance to a given value between multilevel Monte-Carlo (with optimal choice of $N_\ell$) and standard Monte-Carlo  \eqref{stdMC:eq} is then given by
\[
\left( \sum_{\ell = 1}^L \sqrt{V_\ell C_\ell}\right)^2 \; \big/ \;  \left(\V[f]C_1\right) .
\]
This is the basic quantitative relation indicating how the costs $C_\ell$ to evaluate the $f_\ell$ and the variances $V_\ell$ of the differences $f_\ell-f_{\ell+1}$ have to relate in order for the multilevel approach to be more efficient than standard Monte-Carlo estimation of $f$.

\section{Stochastic estimation of the trace of a matrix} \label{trace_est:sec}
We now assume that we are given, in an indirect manner, a matrix $A = (a_{ij}) \in \C^{n \times n}$ for which we want to compute the trace
\[
\tr(A) = \sum_{i=1}^n a_{ii}. 
\]
Our basic assumption is that the entries $a_{ii}$ of $A$ are neither available directly nor that they can all be obtained at decent computational cost. This is typically the case when $A$ arises as a function of a large (and sparse) matrix, the most common case being the matrix inverse.

In a seminal paper \cite{Hutchinson90}, Hutchinson suggested to use a stochastic estimator to approximate $\tr(A)$. The following theorem summarizes his result together with the generalizations on the admissible probability spaces; see \cite{DongLiu1993,Wilcox99}, e.g.

\begin{theorem} \label{hutchinson:thm}
Let $\P: \Omega \to [0,1]$ be a probability measure on a sample space $\Omega$ and assume that the components $x_i$ of the vector $x \in \mathC^n$ are random variables depending on $\omega \in \Omega$ satisfying
\begin{equation} \label{P_assumptions:eq}
 \E[x_i] = 0 \enspace \mbox{ and } \enspace
 \E[\overline{x}_ix_j] =  \delta_{ij} \enspace \mbox{ (where $\delta_{ij}$ is the Kronecker delta)}.
\end{equation}
Then
\[
   \E[x^*Ax] = \tr(A) \mbox{ and }
    \V[x^*Ax] = \sum_{\stackrel{i,j,k,p=1}{i\neq j, k \neq p}}^n \overline{a}_{ij}a_{kp} \E[  \overline{x}_ix_j\overline{x}_kx_p].
\]
In particular, if the probability space is such that each component $x_i$ is independent  from $x_j$ for $i \neq j$, then
\[
\V[x^*Ax] =  \sum_{\stackrel{i,j}{i \neq j}}^n\overline{a}_{ij}a_{ij} 
+ 
\sum_{\stackrel{i,j}{i \neq j}}^n\overline{a}_{ij}a_{ji} \E[x_i^2]\E[\overline{x}_j^2]. 
\]
\end{theorem}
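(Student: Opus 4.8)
The plan is to compute the expectation and the variance directly from the definition of $x^*Ax = \sum_{i,j} \overline{x}_i a_{ij} x_j$, using linearity of expectation and the two moment assumptions in \eqref{P_assumptions:eq}. For the expectation, I would write $\E[x^*Ax] = \sum_{i,j} a_{ij}\E[\overline{x}_ix_j] = \sum_{i,j} a_{ij}\delta_{ij} = \sum_i a_{ii} = \tr(A)$, which is immediate. The substance is in the variance.

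For the variance, I would start from $\V[x^*Ax] = \E[|x^*Ax|^2] - |\E[x^*Ax]|^2 = \E[\,\overline{(x^*Ax)}(x^*Ax)\,] - |\tr(A)|^2$. Expanding the product gives a quadruple sum $\sum_{i,j,k,p} \overline{a}_{ij} a_{kp}\,\E[\overline{x}_i x_j \overline{x}_k x_p]$ (note $\overline{\overline{x}_i a_{ij} x_j} = \overline{a}_{ij} x_i \overline{x}_j$, so after relabeling one gets this form). The key bookkeeping step is to split the fourth-order moments $\E[\overline{x}_i x_j \overline{x}_k x_p]$ according to which indices coincide. When we are summing over \emph{all} quadruples, the terms with $i=j$ or $k=p$ must be isolated: if both $i=j$ and $k=p$ the contribution is $\sum_{i,k}\overline{a}_{ii}a_{kk}\E[\overline{x}_ix_i]\E[\overline{x}_kx_k] = \sum_{i,k}\overline{a}_{ii}a_{kk} = |\tr(A)|^2$, which exactly cancels the $-|\tr(A)|^2$ term; the mixed cases $i=j,\ k\neq p$ (and symmetrically) vanish because $\E[x_j]=0$ decouples. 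What survives is precisely the sum over $i\neq j$ and $k\neq p$, giving the first displayed formula for $\V[x^*Ax]$.

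For the second, specialized formula, I would impose independence of distinct components. Then in $\sum_{i\neq j,\,k\neq p} \overline{a}_{ij}a_{kp}\,\E[\overline{x}_ix_j\overline{x}_kx_p]$ the moment $\E[\overline{x}_ix_j\overline{x}_kx_p]$ is nonzero only when the multiset $\{i,j\}$ matches $\{k,p\}$ (so that no lone factor with zero mean is left); since $i\neq j$ and $k\neq p$, this forces either $(k,p)=(i,j)$ or $(k,p)=(j,i)$. The first case contributes $\sum_{i\neq j}\overline{a}_{ij}a_{ij}\,\E[\overline{x}_ix_i]\E[x_j\overline{x}_j] = \sum_{i\neq j}\overline{a}_{ij}a_{ij}$, using $\E[\overline{x}_ix_i]=1$; the second contributes $\sum_{i\neq j}\overline{a}_{ij}a_{ji}\,\E[\overline{x}_i^2]\E[x_j^2]$, and writing $\E[\overline{x}_i^2] = \overline{\E[x_i^2]}$ and relabeling yields the stated $\E[x_i^2]\E[\overline{x}_j^2]$ form. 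Adding the two cases gives the claimed identity.

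The main obstacle is purely combinatorial rather than analytic: organizing the case distinction on coincidences among the four indices $i,j,k,p$ carefully enough that nothing is double-counted and that the $-|\tr(A)|^2$ cancellation is made transparent. Once the correct partition of the index set is in place, every term is evaluated by a single application of \eqref{P_assumptions:eq} (or of independence), so no genuinely hard estimate is involved. I would be careful with complex conjugation throughout, since $A$ is allowed to be non-Hermitian and the $x_i$ are complex, which is why the $a_{ji}$ term carries $\E[x_i^2]\E[\overline{x}_j^2]$ rather than a real second moment.
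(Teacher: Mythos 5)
Your proof is correct and follows essentially the same route as the paper's: the expectation step is identical, and the variance step is the same expand-and-classify-indices argument, with one organizational difference. The paper first replaces $x^*Ax-\tr(A)$ by the off-diagonal sum $\sum_{i\neq j}\overline{x}_i a_{ij} x_j$ and then squares, so the $i=j$ and $k=p$ terms never appear, whereas you square first and cancel $|\tr(A)|^2$ against the $i=j,\ k=p$ block afterwards. It is worth noting that the step that removes the diagonal contributions --- in your version, the factoring $\E[\overline{x}_ix_i\overline{x}_kx_k]=\E[\overline{x}_ix_i]\E[\overline{x}_kx_k]=1$ also when $i=k$, and in the paper's version, the replacement of $x^*Ax-\tr(A)$ by the off-diagonal sum --- silently uses $\E[|x_i|^4]=1$, equivalently $|x_i|=1$ almost surely; this holds for the unit-modulus distributions \eqref{iid_components1:eq}--\eqref{iid_components3:eq} but is an additional hypothesis beyond \eqref{P_assumptions:eq}, and both your argument and the paper's share this implicit reliance.
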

\begin{proof} The proof is simple, but we repeat it here because the literature often treats only the real and not the general complex case. We have
\[
\E[x^*Ax] = \sum_{i=1}^n a_{ii}\E(\overline{x}_ix_i) + \sum_{i,j=1, i\neq j}^n a_{ij}\E(\overline{x}_ix_j) = \tr(A),
\]
where the last inequality follows from \eqref{P_assumptions:eq}. Similarly
\begin{eqnarray}
\V[x^*Ax] &=& \E \left[ \overline{(x^*Ax-\tr(A))}(x^*Ax-\tr(A))\right] \nonumber \\
          &=& \E \big[ \big( \sum_{\stackrel{i,j=1}{i\neq j}}^n x_i\overline{a}_{ij} \overline{x}_j\big)  \big( \sum_{\stackrel{k,p=1}{k\neq p}}^n \overline{x}_k a_{kp} x_p\big)\big] \nonumber \\
          &=& \E \big[  \sum_{\stackrel{i,j,k,p=1}{ i\neq j, k\neq p}}^n \overline{a}_{ij}a_{kp}x_i\overline{x}_j \overline{x}_k  x_p\big] 
          \, = \,  \sum_{\stackrel{i,j,k,p=1}{i \neq j, k \neq p}}^n\overline{a}_{ij}a_{kp} \E[x_i\overline{x}_j \overline{x}_k  x_p]
          \label{sum_prod_expected:eq}.
\end{eqnarray}
Since the components $x_i$ are assumed to be independent, we have $\E[\overline{x}_ix_j \overline{x}_k  x_p]
= 0$ except when $i =j, k=p$ (which does not occur in \eqref{sum_prod_expected:eq}) or $i=k, j=p$ or $i=p, j=k$. This gives
\[
\sum_{\stackrel{i,j,k,p=1}{i \neq j, k \neq p}}^n\overline{a}_{ij}a_{kp} \E[x_i\overline{x}_j \overline{x}_k  x_p] 
=
\sum_{\stackrel{i,j}{i \neq j}}^n\overline{a}_{ij}a_{ij} \E[x_i\overline{x}_j \overline{x}_i  x_j] 
+ 
\sum_{\stackrel{i,j}{i \neq j}}^n\overline{a}_{ij}a_{ji} \E[x_i\overline{x}_j \overline{x}_j  x_i], 
\]
and in the first sum $\E[x_i\overline{x}_j \overline{x}_i  x_i] = \E[\overline{x}_ix_i] \E[\overline{x}_jx_j] = 1$ by assumption, whereas in the second sum we have $ 
\E[x_i\overline{x}_j \overline{x}_j  x_i] = \E[x_i^2]\E[\overline{x}_j^2]$.
\end{proof}

Note that as a definition for the variance of a complex random variable $y$ we used $\E[(\overline{y-E(y})(y-E[y]]$ 
rather than $\E[(y-E[y])^2] $ to keep it real and non-negative.

Standard choices for the probability spaces are to take $x$ with identically and independently distributed (i.i.d.) components as
\begin{align}
\label{iid_components1:eq}    & x_i \in \{-1,1\} \mbox{ with equal probability } \tfrac{1}{2}, \\
\label{iid_components2:eq}    &x_i \in \{-1,1,-i,i\} \mbox{ with equal probability } \tfrac{1}{4}, \\
\label{iid_components3:eq}    & x_i = \exp(i\theta) \mbox{ with $\theta$ uniformly distributed in  $[0,2\pi]$}, \\
\label{iid_components4:eq}    & x_i \mbox{ is $N(0,1)$ normally distributed}.
\end{align}

\begin{corollary} \label{variance:cor}
If the components $x_i$ are i.i.d.\ with distribution \eqref{iid_components1:eq} or \eqref{iid_components4:eq}, then 
\[
\V[x^*AXx] = \frac{1}{2}\|\offdiag(A+A^T)\|_F^2,
\]
where $\| \cdot \|_F$ denotes the Frobenius norm and $\offdiag$ the offdiagonal part of a matrix. 
If the components are i.i.d.\ with distribution \eqref{iid_components2:eq} or \eqref{iid_components3:eq}, then 
\[
\V[x^*AXx] = \|\offdiag(A)\|_F^2.
\]
\end{corollary}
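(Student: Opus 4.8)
The plan is to specialize the independent-component variance formula from Theorem~\ref{hutchinson:thm},
\[
\V[x^*Ax] = \sum_{\stackrel{i,j}{i\neq j}}^n \overline{a}_{ij}a_{ij} + \sum_{\stackrel{i,j}{i\neq j}}^n \overline{a}_{ij}a_{ji}\,\E[x_i^2]\,\E[\overline{x}_j^2],
\]
to each of the four distributions \eqref{iid_components1:eq}--\eqref{iid_components4:eq}. First I would verify that all four choices satisfy the hypotheses \eqref{P_assumptions:eq}, so that the formula applies: in every case $\E[x_i]=0$ by symmetry of the distribution, $\E[|x_i|^2]=1$ by direct evaluation, and $\E[\overline{x}_ix_j]=\E[\overline{x}_i]\E[x_j]=0$ for $i\neq j$ by independence.

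The only remaining distribution-dependent quantity is $\E[x_i^2]$ (and then $\E[\overline{x}_j^2]=\overline{\E[x_j^2]}$). For the real distributions \eqref{iid_components1:eq} and \eqref{iid_components4:eq} one has $x_i^2=|x_i|^2$, hence $\E[x_i^2]=1$; for \eqref{iid_components2:eq} the value $x_i^2$ takes each of $\pm1$ with probability $\tfrac12$, so $\E[x_i^2]=0$; and for \eqref{iid_components3:eq}, $\E[x_i^2]=\tfrac{1}{2\pi}\int_0^{2\pi} e^{2\i\theta}\,d\theta=0$. Substituting these into the formula kills the second sum in the two phase cases, yielding $\V[x^*Ax]=\sum_{i\neq j}|a_{ij}|^2=\|\offdiag(A)\|_F^2$, while in the two real cases it leaves
\[
\V[x^*Ax] = \sum_{\stackrel{i,j}{i\neq j}}^n |a_{ij}|^2 + \sum_{\stackrel{i,j}{i\neq j}}^n \overline{a}_{ij}a_{ji}.
\]

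To finish the real cases I would expand the target quantity. Since $(A+A^T)_{ij}=a_{ij}+a_{ji}$ for $i\neq j$,
\[
\|\offdiag(A+A^T)\|_F^2 = \sum_{\stackrel{i,j}{i\neq j}}^n |a_{ij}+a_{ji}|^2 = \sum_{\stackrel{i,j}{i\neq j}}^n\bigl(|a_{ij}|^2+|a_{ji}|^2+\overline{a}_{ij}a_{ji}+a_{ij}\overline{a}_{ji}\bigr),
\]
and relabelling $i\leftrightarrow j$ gives $\sum_{i\neq j}|a_{ji}|^2=\sum_{i\neq j}|a_{ij}|^2$ and $\sum_{i\neq j}a_{ij}\overline{a}_{ji}=\sum_{i\neq j}\overline{a}_{ij}a_{ji}$, so the right-hand side equals $2\sum_{i\neq j}|a_{ij}|^2+2\sum_{i\neq j}\overline{a}_{ij}a_{ji}$, i.e.\ exactly twice the expression just obtained for $\V[x^*Ax]$; dividing by two gives the claimed identity.

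I do not anticipate a genuine obstacle here: the argument is essentially plugging in moments. The only points requiring a bit of care are the index-relabelling bookkeeping in the last display, and keeping in mind that what enters the variance formula is the \emph{product} $\E[x_i^2]\,\E[\overline{x}_j^2]$ rather than $\E[x_i^2\overline{x}_j^2]$—which is precisely what the independence hypothesis in Theorem~\ref{hutchinson:thm} allows—together with the (harmless) observation that for \eqref{iid_components1:eq} and \eqref{iid_components4:eq} the vector $x$ is real so no complex conjugation is lost.
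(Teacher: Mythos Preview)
Your proposal is correct and follows essentially the same route as the paper's proof: specialize the independent-component variance formula from Theorem~\ref{hutchinson:thm} by computing $\E[x_i^2]$ for each distribution, then in the real cases regroup $\sum_{i\neq j}|a_{ij}|^2+\sum_{i\neq j}\overline{a}_{ij}a_{ji}$ into $\tfrac12\sum_{i\neq j}|a_{ij}+a_{ji}|^2$. You are simply more explicit than the paper about verifying the moment hypotheses and about the index-relabelling step, which the paper compresses into a single displayed equality.
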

\begin{proof} 
For the distributions \eqref{iid_components1:eq} and \eqref{iid_components4:eq}, the components $x_i$ have only real values and $\E[x_i^2] = 1$. Therefore
\begin{eqnarray*}
\sum_{\stackrel{i,j}{i \neq j}}^n\overline{a}_{ij}a_{ij} 
+ 
\sum_{\stackrel{i,j}{i \neq j}}^n\overline{a}_{ij}a_{ji} \E[x_i^2]\E[\overline{x}_j^2] 
& = &
\sum_{\stackrel{i,j}{i \neq j}}^n\overline{a}_{ij}a_{ij} + 
\sum_{\stackrel{i,j}{i \neq j}}^n\overline{a}_{ij}a_{ji} \\
&= & 
 \frac{1}{2} \sum_{\stackrel{i,j}{i \neq j}}^n(\overline{a_{ij}+a_{ji}})(a_{ij}+a_{ji}) \\
 &=& \frac{1}{2} \|\offdiag(A+A^T)\|_F^2.
\end{eqnarray*}
For the distributions \eqref{iid_components2:eq} and \eqref{iid_components3:eq}we have
$\E[x_i^2] = 0$, and thus
\[
\sum_{\stackrel{i,j}{i \neq j}}^n\overline{a}_{ij}a_{ij} 
+ 
\sum_{\stackrel{i,j}{i \neq j}}^n\overline{a}_{ij}a_{ji} \E[x_i^2]\E[\overline{x}_j^2] 
 = 
\sum_{\stackrel{i,j}{i \neq j}}^n\overline{a}_{ij}a_{ij} 
=    \|\offdiag(A)\|_F^2.
\]
\end{proof}

In a practical situation where we approximate $\tr(A)$ by averaging over $N$ samples we can compute the root mean square deviation along 
with the averages and rely on the law of large numbers to assess the probability that the computed mean lies within the $\sigma$, $2\sigma$ 
or $3\sigma$ interval. Several results on Hutchinson's method have been formulated which go beyond this asymptotic aspects by giving tail or 
concentration bounds; see 
\cite{AvronToledo2011,CortinovisKressner2020,roosta2015improved}, e.g. For the sake of illustration we here report a 
summary of these results as given in \cite{meyer2021hutch++}. In 
our numerical examples, we will simply work with the root mean square deviation to assess accuracy.

\begin{theorem}
Let the distribution for the i.i.d.\ components of the random vectors $x^i$ be sub-Gaussian, and let $\epsilon, \delta \in (0,1)$. Then 
for $N = \mathcal{O}(\log(1/\delta)/\epsilon^2)$ we have that the probability for   
\begin{equation} \label{eq:haim_toledo}
 \left| \frac{1}{N} \sum_{i=1}^n (x^i)^*Ax^i - \tr(A) \right| \leq \epsilon \|A\|_ F
\end{equation}
is $\geq 1 - \delta$.
\end{theorem}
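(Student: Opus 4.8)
The plan is to recognize inequality~\eqref{eq:haim_toledo} as an instance of the Hanson--Wright concentration bound for quadratic forms in sub-Gaussian random vectors, applied not to a single sample but to the concatenation of all $N$ samples. Let $K$ denote the sub-Gaussian parameter of the scalar distribution in question, and recall from Theorem~\ref{hutchinson:thm} that $\E[(x^i)^{*}Ax^i]=\tr(A)$ for each $i$.

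First I would stack the $N$ independent samples into one random vector $y\in\C^{nN}$ by concatenating $x^1,\dots,x^N$; its $nN$ entries are again i.i.d.\ sub-Gaussian with parameter $K$. Setting $B=\tfrac1N\,(I_N\otimes A)$, one has $y^{*}By=\tfrac1N\sum_{i=1}^{N}(x^i)^{*}Ax^i$ with $\E[y^{*}By]=\tr(B)=\tr(A)$, and $\|B\|_F=\|A\|_F/\sqrt{N}$, $\|B\|_2=\|A\|_2/N\le\|A\|_F/N$. Next I would invoke the Hanson--Wright inequality: there is an absolute constant $c>0$ such that for all $t\ge0$,
\[
\P\!\left[\,\bigl|\,y^{*}By-\tr(B)\,\bigr|>t\,\right]\le 2\exp\!\left(-c\,\min\!\left(\frac{t^{2}}{K^{4}\|B\|_F^{2}},\,\frac{t}{K^{2}\|B\|_2}\right)\right).
\]
For the complex-valued distributions \eqref{iid_components2:eq}--\eqref{iid_components3:eq} one first splits $y^{*}By$ into its real and imaginary parts, each a real quadratic form in the $2nN$ real coordinates, and applies the real version to each, which only affects the constant $c$. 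Substituting $t=\epsilon\|A\|_F$ together with the norm identities above makes the first entry of the $\min$ equal to $\epsilon^{2}N/K^{4}$ and the second at least $\epsilon N/K^{2}$; since $\epsilon<1$ and $K$ is of order one (in particular bounded below) for all of \eqref{iid_components1:eq}--\eqref{iid_components4:eq}, the first is the smaller one, so the right-hand side is at most $2\exp(-c\,\epsilon^{2}N/K^{4})$. Requiring this to be $\le\delta$ gives $N\ge (K^{4}/c)\,\epsilon^{-2}\log(2/\delta)$, i.e.\ $N=\mathcal{O}\!\left(\log(1/\delta)/\epsilon^{2}\right)$, with the hidden constant depending on the distribution only through $K$.

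The sole nontrivial ingredient is the Hanson--Wright inequality itself, which I would take as known; it is exactly what underlies the concentration results cited in \cite{AvronToledo2011,CortinovisKressner2020,roosta2015improved,meyer2021hutch++}. If a self-contained argument is wanted, the natural alternative is the moment method: expand $\E\bigl[\,|y^{*}By-\tr(B)|^{2q}\,\bigr]$ as a polynomial in the independent entries of $y$, discard the unbalanced monomials exactly as in the proof of Theorem~\ref{hutchinson:thm}, bound the survivors by sub-Gaussian moment estimates to obtain growth of order $(C q\,\|B\|_F^{2})^{q}$ in the relevant regime, and apply Markov's inequality with $q\asymp\epsilon^{2}N$. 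The main obstacle, modest as it is, is tracking the dependence of the $\mathcal{O}(\cdot)$-constant on $K$ and confirming that the choice $t=\epsilon\|A\|_F$ with $\epsilon\in(0,1)$ keeps us in the Gaussian (first) branch of the $\min$---both routine once the block-diagonal reduction has been made.
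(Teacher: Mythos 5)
The paper does not actually prove this theorem: it is presented only as ``a summary of these results as given in \cite{meyer2021hutch++}'' (cf.\ \cite{AvronToledo2011,CortinovisKressner2020,roosta2015improved}), so there is no in-paper argument to compare against. Your Hanson--Wright derivation is nevertheless a correct, self-contained proof, and it is in substance the mechanism behind those citations: every modern tail bound of this shape reduces to quadratic-form concentration for sub-Gaussian vectors. The block reduction $y=(x^1;\dots;x^N)$, $B=\tfrac{1}{N}(I_N\otimes A)$ is exactly the right device for turning $N$ independent samples into a single Hanson--Wright instance, and your bookkeeping---$\|B\|_F=\|A\|_F/\sqrt{N}$, $\|B\|_2=\|A\|_2/N$, and the observation that the Gaussian branch of the $\min$ is active since $\epsilon<1\le K^2$ for any unit-variance sub-Gaussian distribution---is accurate. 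The two caveats you flag yourself are the only ones that matter: the paper works over $\C$, so the real Hanson--Wright inequality must be applied to the real and imaginary parts separately (a union bound with $\delta/2$ per part suffices), and the constant hidden in $\mathcal{O}(\log(1/\delta)/\epsilon^2)$ depends on the distribution through $K^4$, which is consistent with the theorem's nonquantitative $\mathcal{O}(\cdot)$ phrasing. In short: the proof is correct and coincides with the intended argument of the sources the paper points to.
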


Note that if $A$ is symmetric positive semidefinite with $\lambda_i$ denoting its 
(non-negative) eigenvalues, then 
\[
\|A\|_F = \left(\sum_{i=1}^n \lambda_i^2 \right)^{1/2} \leq \sum_{i=1}^n \lambda_i = \tr(A),
\]
implying that \eqref{eq:haim_toledo} yields a (probabilistic) relative error bound for the trace. Also note 
that the real distributions \eqref{iid_components1:eq} and \eqref{iid_components4:eq} are sub-Gaussian; see \cite{meyer2021hutch++}.

\section{Multilevel Monte-Carlo for the trace of the inverse} \label{mlmc_trace:est}
We now turn to the situation where we want to estimate $\tr(A^{-1})$ for a large and sparse matrix $A$. Direct application of Theorem~\ref{hutchinson:thm} shows that  an unbiased estimator for $\tr(A^{-1})$ is given by
\begin{equation} \label{plain_tr_estimate:eq}
\frac{1}{N} \sum_{i=1}^N x^{(i)}A^{-1}x^{(i)} \approx \tr(A^{-1}),
\end{equation}
where the vectors $x^{(i)}$ are independent random variables satisfying \eqref{P_assumptions:eq}, and that its variance is 
\[
\frac{1}{N} \| \offdiag(A^{-1}+A^{-T})\|_F^2 \enspace \mbox { or } \enspace \frac{1}{N} \|\offdiag(A^{-1})\|_F^2,
\]
depending on whether the components of $x^{(i)}$ satisfy \eqref{iid_components1:eq}, \eqref{iid_components4:eq} or \eqref{iid_components2:eq}, \eqref{iid_components3:eq}, respectively. 

Each time we add a sample $i$ to \eqref{plain_tr_estimate:eq} we have to solve a linear system with matrix $A$ and right hand side $x^{(i)}$, and the cost for solving these linear systems determines the cost for each stochastic estimate. For a large class of matrices, multigrid methods represent particularly efficient linear solvers. We assume that this is the case for our matrix $A$ and now describe how to derive a multilevel Monte-Carlo method for the approximation of $\tr(A^{-1})$ which
uses the multigrid hierarchy not only for the linear solver, but also to obtain a good representation \eqref{mlmc_representation:eq} required for a multilevel Monte-Carlo approach.

\subsection{Derivation of a mutilevel Monte-Carlo method}
\label{sect:derivation_MLMC}
Multigrid methods rely on the interplay between a smoothing iteration and a coarse grid correction 
which are applied alternatingly. In the geometric interpretation, where we view components of vectors as representing a continuous function on a discrete grid, the smoother has the property to make the error of the current iterate smooth, i.e.\ varying slowly from one grid point to the next. Such error can be
represented accurately by a coarser grid, and the coarse grid correction solves for this coarse error
on the coarse grid using a coarse grid representation of the matrix. The solution is then interpolated back to the original ``fine'' grid and applied as a correction to the iterate. The principle can be applied recursively using a sequence of coarser grids with corresponding operators, the solves on the coarsest grid being obtained by direct factorization.

To obtain a multilevel Monte-Carlo decomposition we discard the smoother and only consider the coarse grid operators and the intergrid transfer operators which we now describe algebraically. The coarse grid operators are given by a sequence of matrices 
\[
A_\ell \in \C^{n_\ell \times n_\ell}, \ell = 1,\ldots,L,
\]
representing the original matrix $A = A_1 \in \mathC^{n_1 \times n_1}$  on the different levels $\ell = 1,\ldots,L$; the prolongation and restriction operators 
\[
P_\ell \in \C^{n_{\ell}\times n_{\ell+1}}, \; R_\ell \in \C^{n_{\ell+1} \times n_\ell}, \; \ell = 1,\ldots,L-1.
\]
transfer data between the levels.
Typically, when $A$ is Hermitian, one takes $P_\ell = R_\ell^*$, and for given $P_\ell, R_\ell$ the 
coarse system matrices $A_\ell$ are often constructed using the Petrov-Galerkin approach
\[
A_{\ell+1} = R_\ell A_\ell P_\ell, \; \ell = 1,\ldots, L-1.
\]
Using the accumulated prolongation and restriction operators
\[
\hat{P}_\ell =  P_1 \cdots P_{\ell-1}  \in \C^{n \times n_\ell}, \hat{R}_\ell = R_{\ell-1} \cdots R_1 \in \C^{n_\ell \times n}, \; \ell = 1,\ldots, L,
\]
where we put $\hat{R}_1 = \hat{P}_1 = I \in \C^{n \times n}$ by convention, we regard $\hat{P}_\ell A_\ell^{-1} 
\hat{R}_\ell$ as the approximation to $A^{-1}$ at level $\ell$. We thus obtain a 
multilevel decomposition for the trace as
\begin{equation} \label{tr_ml_dec:eq}
\tr(A^{-1}) = \sum_{\ell=1}^{L-1} \tr\left(\hat{P}_\ell A_\ell^{-1} \hat{R}_\ell- \hat{P}_{\ell+1} A_{\ell+1}^{-1} \hat{R}_{\ell+1}\right) + \tr(\hat{P}_L A_L^{-1} \hat{R}_L).
\end{equation}
This gives
\[
\tr(A^{-1}) = \sum_{\ell=1}^{L-1}  \E\left[(x^\ell)^*\left(\hat{P}_\ell A_\ell^{-1} \hat{R}_\ell - \hat{P}_{\ell+1} A_{\ell+1}^{-1} \hat{R}_{\ell+1} \right)x^\ell\right] + \E\left[(x^L)^*\hat{P}_L A_L^{-1} \hat{R}_Lx^L\right],
\]
with the components of $x^\ell \in \mathC^n$ being i.i.d.\ stochastic variables satisfying \eqref{P_assumptions:eq}.
The unbiased multilevel Monte-Carlo estimator is then
\begin{eqnarray*}
\tr(A^{-1}) &\approx& \sum_{\ell=1}^{L-1} \sum_{i=1}^{N_\ell} \left( (x^{i,\ell})^*\hat{P}_\ell A_\ell^{-1} \hat{R}_\ell x^{i,\ell}- (x^{i,\ell})^*\hat{P}_{\ell+1} A_{\ell+1}^{-1} \hat{R}_{\ell+1}x^{i,\ell}\right) \\
& & \mbox{}+ \sum_{i=1}^{N_L}(x^{n,L})^*\hat{P}_L A_L^{-1} \hat{R}_Lx^{i,L},
\end{eqnarray*}
where the vectors $x^{i,\ell} \in \C^n$ are stochastically independent samples of the random variable $x \in \mathC^n$ satisfying \eqref{P_assumptions:eq}.

The following remarks collect some important observations about this stochastic estimator.

\begin{remark} 
Computationally, the estimator requires to solve systems of the form $A_\ell y^{n,\ell} = z$ with $z = \hat{R}_{\ell}x^{n,\ell} $. Since the matrices $A_\ell$ arise from the multigrid hierarchy, we directly have a multigrid method available for these systems by restricting the method for $A$ to the levels $\ell, \ldots,L$.
\end{remark}
\begin{remark}
Since for any two matrices $B = (b_{ij}) \in \C^{n \times m}$ and $C = (c_{kl}) \in \C^{m \times n}$ the trace of their product does not depend on the order, 
\begin{equation} \label{trace_commute:eq}
 \tr(BC) = \sum_{i=1}^n \sum_{j=1}^m b_{ij}c_{ji} = \sum_{j=1}^m \sum_{i=1}^n c_{ji}b_{ij} = \tr(CB),   
\end{equation} 
we have
    \[
       \tr(\hat{P}_LA^{-1}_L\hat{R}_L) = \tr(A_L^{-1}\hat{P}_L\hat{R}_L).
     \]
     So, instead of estimating the contribution $\tr(\hat{P}_LA_L^{-1}\hat{R}_L)$ in \eqref{tr_ml_dec:eq} stochastically, we can also compute it directly by inverting the matrix $A_L$ and computing the product $A_L^{-1}\hat{R}_L\hat{P}_L$. Note that $\hat{R}_L$ and $\hat{P}_L$ are 
     usually sparse with a maximum of $d$, say, non-zero entries per row. The arithmetic work for $A_L^{-1}\hat{R}_L\hat{P}_L$ is thus of order $\mathcal{O}(dN_L^2)$ for the product $\hat{R}_L\hat{P}_L$ plus $\mathcal{O}(n_L^3)$ for the inversion of $A_L$ and the product $A_L^{-1}(\hat{R}_L\hat{P}_L$). Since the variance of $x^*\hat{P}_LA^{-1}_L\hat{R}_Lx$ is presumably large, this direct computation can be much more efficient than a stochastic estimation, even when we aim at only quite low precision in the stochastic estimate.
\end{remark}
\begin{remark} \label{rem:simplified}
     There are situations where $\hat{R}_\ell \hat{P}_\ell = I \in \C^{n_\ell \times n_\ell}$, for example in aggregation based multigrid methods, where the columns of $P_\ell$ are orthonormal and $R_\ell = P_\ell^*$, see \cite{Braess95, Brezinaetal2005}. Then
     \begin{eqnarray*}
     \tr(\hat{P}_\ell A_\ell^{-1}\hat{R}_\ell) &=& \tr(A_\ell^{-1}\hat{R}_\ell\hat{P}_\ell) \, = \,  \tr(A_\ell^{-1}),
     \end{eqnarray*}
and
  \begin{eqnarray*}
      \tr(\hat{P}_{\ell+1} A_{\ell+1}^{-1}\hat{R}_{\ell+1}) &=& \tr(\hat{P}_{\ell}P_{\ell}A_{\ell+1}^{-1}R_{\ell} \hat{R}_\ell) \, 
      = \, \tr(P_{\ell}A_{\ell+1}^{-1}R_{\ell} \hat{R}_\ell\hat{P}_{\ell}) 
      \, = \, \tr(P_{\ell}A_{\ell+1}^{-1}R_{\ell}).
     \end{eqnarray*}
     This means that instead of the multilevel decomposition \eqref{tr_ml_dec:eq} we can use
     \[
        \tr(A) = \sum_{\ell=1}^{L-1}\left( \tr(A_\ell^{-1})- \tr({P}_{\ell} A_{\ell+1}^{-1} {R}_{\ell})\right) + \tr(A_L^{-1}),   
     \]
     in which the stochastic estimation on level $\ell$ now involves random vectors from $\C^{n_\ell}$ instead of $\C^n$. 
\end{remark}

\subsection{Discussion of the multilevel Monte-Carlo method}
A profound analysis of the proposed multilevel Monte-Carlo method must take the approximation properties
of the representation of the matrix at the various levels into account. This is highly problem 
dependent, so that in this paper we only provide a discussion of heuristics on why the proposed approach has the potential to yield efficient multilevel Monte-Carlo schemes. 

To simplify the discussion to follow, let us assume that the variance  of the estimator at level $\ell$ is given by the square of the Frobenius norm of the off-diagonal part.
This is the case, for example, if the components are i.i.d.\ with distribution \eqref{iid_components2:eq} or \eqref{iid_components3:eq}; see Corollary~\ref{variance:cor}. This Frobenius norm can be related to the singular values of $A$. Recall that the singular value decomposition of a non-singular matrix $A$ is
\begin{eqnarray}
A &=& U \Sigma V^* \enspace \mbox { with }  U,\Sigma,V \in \C^{n\times n}, U^*U = V^*V = I, \label{svd:eq}\\
 & & U = [u_1 | \cdots | u_n], \; V= [v_1| \ldots |v_n], \nonumber \\
 & & \Sigma = \diag(\sigma_1,\ldots,\sigma_n), 0< \sigma_1 \leq \cdots \leq \sigma_n, \nonumber
\end{eqnarray}
with left singular vectors $u_i$, right singular vectors $v_i$ and positive singular values $\sigma_i$ which we ordered by increasing value for convenience here. 
In the following we base all our discussion on singular values and vectors. It is therefore worthwhile to mention that in the case of a Hermitian matrix $A$ this discussion simplifies in the sense that then the singular values are the moduli of the eigenvalues, and left and right singular vectors are identical and coincide with the eigenvectors.

\begin{lemma} \label{offdiagnorm:lem} Let $A \in \C^{n \times n}$ have singular values $\sigma_i, i=1,\ldots,n$. Then 
\begin{equation} \label{svd_estimate:eq}
\| \offdiag(A) \|^2_F = \sum_{i=1}^n \sigma_i^2 - \sum_{i=^1}^n |a_{ii}|^2.
\end{equation}
\end{lemma}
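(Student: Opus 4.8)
The plan is to reduce the claimed identity to the elementary fact that the squared Frobenius norm of $A$ equals the sum of the squares of its singular values, and then to prove that fact from the singular value decomposition \eqref{svd:eq}. First I would split the Frobenius norm into its diagonal and off-diagonal contributions:
\[
\|A\|_F^2 = \sum_{i,j=1}^n |a_{ij}|^2 = \sum_{i=1}^n |a_{ii}|^2 + \sum_{\stackrel{i,j}{i \neq j}}^n |a_{ij}|^2 = \sum_{i=1}^n |a_{ii}|^2 + \|\offdiag(A)\|_F^2 .
\]
Rearranging, \eqref{svd_estimate:eq} is seen to be equivalent to the identity $\|A\|_F^2 = \sum_{i=1}^n \sigma_i^2$.

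Second, I would establish this remaining identity. Using the Frobenius inner product $\langle X,Y\rangle = \tr(X^*Y)$ we have $\|A\|_F^2 = \tr(A^*A)$. Substituting the SVD $A = U\Sigma V^*$ and using $U^*U = I$ gives $A^*A = V\Sigma U^*U\Sigma V^* = V\Sigma^2 V^*$, whence, by the cyclicity of the trace \eqref{trace_commute:eq} and $V^*V = I$,
\[
\|A\|_F^2 = \tr(V\Sigma^2 V^*) = \tr(\Sigma^2 V^*V) = \tr(\Sigma^2) = \sum_{i=1}^n \sigma_i^2 .
\]
Combining this with the splitting above yields the lemma. (Equivalently, one may simply invoke the unitary invariance of the Frobenius norm: $\|A\|_F = \|U\Sigma V^*\|_F = \|\Sigma\|_F$, and $\|\Sigma\|_F^2 = \sum_i \sigma_i^2$ because $\Sigma$ is diagonal with entries $\sigma_i$.)

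There is essentially no obstacle in this argument; the only step that deserves a line of care is the passage from $\tr(A^*A)$ to $\sum_i \sigma_i^2$, which rests on the cyclic property of the trace and the orthonormality $U^*U = V^*V = I$ of the singular vectors, both of which are already available in the excerpt.
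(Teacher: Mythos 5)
Your proposal is correct and follows essentially the same route as the paper: split $\|A\|_F^2$ into diagonal and off-diagonal parts and then use the identity $\|A\|_F^2 = \sum_i \sigma_i^2$. The only difference is that the paper cites this identity as a standard fact (referring to Golub--Van Loan) rather than re-deriving it from the SVD as you do, but that is a presentational choice, not a mathematical one.
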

\begin{proof} The equality $\|A\|^2_F = \sum_{i=1}^n \sigma_i^2$ is a basic fact from linear algebra, see \cite{GvL}, e.g. The formula \eqref{svd_estimate:eq} uses this and corrects for the vanishing diagonal part in $\offdiag(A)$.
\end{proof}

For the trace of the inverse $A^{-1}$ we have
\begin{equation} \label{sigular_values_offdiag:eq}
\| \offdiag(A^{-1}) \|^2_F = \sum_{i=1}^n \sigma_i^{-2} - \sum_{i=^1}^n |(A^{-1})_{ii}|^2.
\end{equation}
since the reciprocals of the singular values of $A$, are the singular values of $A^{-1}$. Therefore, in a simplified manner---disregarding the second term in \eqref{sigular_values_offdiag:eq}---it appears that the small singular values of $A$ are  those who contribute most to the variance for the Hutchinson estimator \eqref{plain_tr_estimate:eq} for  $\tr(A^{-1})$. In high performance computing practice, {\em deflation} has thus become a common tool, see \cite{DeGrand:2004qw,Endress:2014qpa,Gambhir_2017,Giusti:2004yp}, e.g., to reduce the variance: 
One precomputes the $k$, say, smallest singular values $\sigma_1,\ldots,\sigma_k$ of 
$A$ in the singular value decomposition  \eqref{svd:eq} together with their left singular vectors $u_1,\ldots,u_k$. With the orthogonal 
projector
\begin{equation} \label{eq:orth_proj}
\Pi = U_k U_k^*, \mbox{ where } U_k = [u_1 | \cdots | u_k],
\end{equation}
we now have $A^{-1} = A^{-1} (I-\Pi) + A^{-1}\Pi$ with
\begin{equation} \label{eq:defl}
 A^{-1} (I-\Pi)  = \sum_{i=k+1}^n v_i \sigma_i^{-1} u_i^*, \quad A^{-1} \Pi  = \sum_{i=1}^k A^{-1}u_iu_i^*.
\end{equation}
This shows that in $A^{-1}(I-\Pi)$ we have deflated the small singular values of $A$, so that we can expect a reduction of the variance when estimating the trace of this part stochastically. The trace of the second part  is equal to 
the sum $\sum_{i=1}^k u_i^* A^{-1}u_i$ (see \eqref{trace_commute:eq}), and $A^{-1}u_i = \sigma_i^{-1}v_i$. So the second part can be computed directly from the singular triplets computed for the deflation. 
If $A$ is Hermitian, the deflation approach simplifies and amounts to precomputing the $k$ smallest eigenpairs. We refer to the results in \cite{Gambhir_2017} for a more in-depth analysis and discussion about the heuristics just presented.

The deflation approach is still quite costly, since one has to precompute the singular values and vectors, and if the size of the matrix increases it is likely that we have to increase $k$ to maintain the same reduction in the variance. Approximate deflation has thus been put forward as an alternative, see \cite{Balietal2015,Romero_2020}, where one can use larger values for $k$ while at the same time allowing that the contribution of the small singular values to the variance is eliminated only approximately. One then replaces $\Pi$ by a more general projector of the form
\[
\Pi = \hat{U}_k(\hat{V}_k^*A\hat{U}_k)^{-1}\hat{V}_k^*A, \enspace \hat{U}_k, \hat{V}_k \in \C^{n \times k}
\]
where now $\hat{U}_k$ and $\hat{V}_k$ can be regarded as containing approximate left and right singular vectors, respectively, as their columns. Actually, it is sufficient that their range is spanned by such approximations to left and right singular vectors, since the construction of $\Pi$ is invariant under transformations $\hat{U} \to \hat{U}B_U, \hat{V} \to \hat{V}B_V$ with non-singular matrices $B_U,B_V \in \C^{k \times k}$. In the decomposition $A^{-1} = A^{-1}(I-\Pi) + A^{-1}\Pi$ we now have, again using \eqref{trace_commute:eq},
\begin{eqnarray*}
\tr(A^{-1}(I-\Pi)) &=&  \tr(A^{-1}) -  \tr(\hat{U}_k(\hat{V}_k^*A\hat{U}_k)^{-1}\hat{V}_k^*) , \\
\tr(A^{-1}\Pi) &=& \tr( \hat{U}_k(\hat{V}_k^*A\hat{U}_k)^{-1}\hat{V}_k^*) .
\end{eqnarray*}
 If $k$ is relatively small, the second trace can be computed directly as in the exact deflation approach. If we take larger values for $k$, we can estimate it stochastically. The inexact deflation approach then becomes a two-level Monte-Carlo method.
 
 If we look at our multilevel Monte-Carlo decomposition \eqref{multilevel_decomposition:eq} with just tow levels, then it differs from inexact deflation in that the value for $k$ is now 
 very large, namely the grid size at level 2 which usually is $\mathcal{O}(n)$. The 
 matrix $\hat{U}_k$ spanning the approximate singular vectors is replaced by the prolongation operator $P_1$, and $\hat{V}_k^*$ corresponds to the restriction operator $R_1$. The multigrid construction principle should ensure that the range of $P_1$ contains good approximations to $\mathcal{O}(n)$ left singular vectors belonging to small singular values, and similarly for $R_1^*$ with respect to right singular vectors. This is why the variance reduction can be expected to be efficient. We thus have a large value of $k$---proportional to $n$---which targets at a high reduction of the variance of the first term. The second term involves the second level matrix representation, which is still of large size, and its trace estimator will, in addition, still have large variance. This is the reason why we extend the approach to involve many levels, ideally until a level $L$ where we can compute the trace directly, so that we do not suffer from a potentially high variance of a stochastic estimator. 
 
 To conclude this discussion, we note that several other techniques for variance reduction have been suggested which can also be regarded as two-level Monte-Carlo techniques. For example, \cite{liu2014polynomial,baral2019} take a decomposition $A^{-1}-p(A) + p(A)$ with an appropriately chosen polynomial $p(A)$ and then estimates $\tr(A^{-1}-p(A))$ stochastically. The ``truncated solver'' method of \cite{Alexandrou_2014} follows a related idea by subtracting an approximation to the inverse. A similar decomposition with $p$ being a truncated Chebyshev series approximation was considered in \cite{Hanetal2017,Han2015,SaadUbaru2017}, for example, in which case $\tr(A^{-1}-p(A)$ is actually neglected. The work then resides in the stochastic estimation of $\tr(p(A))$, thus avoiding to solve linear systems.  
 
 Finally, we refer to \cite{meyer2021hutch++} for a recent further variance reduction technique for Hutchinson's method, enhancing it by using vectors of the form $Av$ with random vectors $v$.

\section{Numerical results} \label{numerical_results:sec}
We consider three classes of matrices: The standard discrete 2d Laplacian, the 2d gauge Laplacian, and the Schwinger model. These three classes represent an increasingly complex hierarchy of problems which will eventually lead to our final, though yet unreached target, the Wilson-Dirac matrix arising in lattice QCD. 
The improvements of 
the multilevel approach compared to ``plain'' Hutchinson 
\eqref{plain_tr_estimate:eq} are tremendous and typically reach 
two orders of magnitude or more. This is why we compare against {\em 
deflated Hutchinson}, where we deflate the $n_\defl$ smallest 
eigenpairs of the matrix $A$. With $U \in \mathC^{n \times n_\defl}$ 
holding the  respective eigenvectors in its columns, we use the 
projector $\Pi = I-UU^*$ as in \eqref{eq:orth_proj}, resulting in the
decomposition \eqref{eq:defl} Therein we estimate $\tr(A^{-1}(I-\Pi))$ 
with the Hutchinson estimator whereas $\tr(A^{-1}\Pi) = \sum_{i=1}^{n_\defl} \lambda_i^{-1}$ is obtained directly from the deflated eigenpairs. We always performed a rough scan to 
determine a number $n_\defl$ of deflated 
eigenpairs which s close to time-optimal. The deflated Hutchinson approach usually gains at least one order of magnitude in time and arithmetic cost over plain Hutchinson. 

All our computations were done on a single thread of an Intel Xeon Processor E5-2699 v4, with a MATLAB R2021a implementation of our numerical experiments for the 2d Laplacian, and in Python for our tests with the gauge Laplacian and the Schwinger model. By default we aimed at a relative accuracy of $\epsilon = 10^{-3}$. This is done as follows: We first perform five stochastic estimates, take their mean and subtract their root mean square deviation, giving the value $\tau$. In the deflated Hutchinson method we now perform stochastic estimates as long as their root mean square deviation exceeds $\epsilon \tau$. For the multilevel Monte-Carlo method we have to prescribe a value for the root mean square deviations $\rho_\ell$ for the stochastic estimation of
each of the traces 
\begin{equation} \label{eq:trace_diff}
 \tr\left(\hat{P}_\ell A_\ell^{-1} \hat{R}_\ell- \hat{P}_{\ell+1} A_{\ell+1}^{-1} \hat{R}_{\ell+1}\right), \enspace \ell = 1,\ldots, L-1
\end{equation}
from \eqref{tr_ml_dec:eq}, while we always compute the last term $\tr(\hat{P}_L A_L^{-1} \hat{R}_L)$ in \eqref{tr_ml_dec:eq} non-sto\-cha\-stic\-al\-ly as $\tr(A_L^{-1} \hat{R}_L\hat{P}_L)$, inverting $A_L$ explicitly. The requirement is to have
\[
\sum_{\ell=1}^{L-1} \rho_\ell^2 = (\epsilon \tau)^2,
\]
so the obvious approach is to put $
\rho_\ell = \epsilon\tau/\sqrt{L-1}$ for all $\ell$, and this is what we do in our first two examples. It might be advantageous, though, to allow for a larger value of $\rho_\ell$ on those level differences where the cost is high, and we do so in Example~\ref{ex:Schwinger}. 

For each stochastic estimate for \eqref{eq:trace_diff} we have to solve linear systems with the matrices $A_\ell$ and $A_{\ell+1}$. This is done using 
a multigrid method based on the same prolongations $P_\ell$, restrictions $R_\ell$ and coarse grid operators $A_\ell$ that we use to obtain our multilevel decomposition \eqref{tr_ml_dec:eq}.
However, when multigrid is used as a solver, we use the full hierarchy going down to coarse grids of very small sizes, whereas in the multilevel decomposition \eqref{tr_ml_dec:eq} used in multilevel Monte-Carlo we might stop at an earlier level.

For all experiments we report mainly two quantities. The first is the number of stochastic estimates that are performed at each level difference \eqref{eq:trace_diff} for multilevel Monte-Carlo together with the number of stochastic estimates in deflated Hutchinson (which always require linear solves at the finest level). These numbers may be interpreted as illustrating how multilevel Monte-Carlo moves the higher variances to the coarser level differences. As a second quantity, we report the approximate arithmetic cost for both methods, deflated Hutchinson and multilevel Monte-Carlo, which we obtain using the following cost model: For every matrix-vector product 
of the form $Bx$ we assume a cost of $\nnz(B)$, the number of nonzeros in $B$. In this manner ,one unit in the cost model roughly corresponds to a multiplication plus an addition. This applies to the computation of residuals, of prolongations and restrictions and the coarsest grid solve in the multigrid solver as well as to the ``global'' restrictions and prolongations $\hat{R}_\ell, \hat{P}_\ell$ used in each stochastic estimate in multilevel Monte-Carlo. For the latter method, we also count the work for the direct computation of the trace at the coarsest level, which involves the inversion of the coarsest grid matrix and additional matrix-matrix products. This cost model thus only neglects vector-vector and scalar operations and is thus considered sufficiently accurate for our purposes.  

\begin{example} \label{ex:2dLaplace} The discrete 2d Laplacian is the $N^2 \times N^2$-matrix
\[
L^N = B \otimes I + I \otimes B, \enspace \text{$I$ the $N \times N$ identity,  }	B_0 = \begin{bmatrix}
				2 & -1\\
				-1 & 2 & \ddots\\
				& \ddots & \ddots & -1 \\
				& & -1 & 2
			\end{bmatrix} \in \mathC^{N \times N},
\]
which results from the finite difference approximation of the Laplace operator on an equidistant grid in the unit square with Dirichlet boundary conditions. Note that the eigenvalues of $L^N$ are explicitly known, so the trace of the inverse is, in principle, directly available as the sum of the inverses of the eigenvalues. 

\begin{table}
    \centering
    \begin{small}
    \begin{tabular}{|r|l|cccccc|cc|}
    \hline
    \multicolumn{10}{|c|}{2d Laplace} \\
      \hline
     $N$  &                &$\ell = 1$&$\ell = 2$&$\ell = 3$&$\ell = 4$&$\ell = 5$&$\ell = 6$& $n_\defl$ & $L$ \\ \hline 
     63   & $n_\ell$       & $63^2$   & $31^2$   & $15^2$   &          &          &          &  92       & 3 \\
          & $\nnz(L^N_\ell)$ & $1.96e4$ & $8.28e3$ & $1.85e3$ &          &          &          &           &  \\ \hline 
    127   & $n_\ell$       & $127^2$  & $63^2$   & $31^2$   &  $15^2$  &          &          &  44       & 4 \\
          & $\nnz(L^N_\ell)$ & $8.01e4$ & $3.50e4$ & $8.28e3$ & $1.85e3$ &          &          &           &  \\ \hline     
    255   & $n_\ell$       & $255^2$  & $127^2$  & $63^2$   & $31^2$   &  $15^2$  &          &  64       & 5 \\
          & $\nnz(L^N_\ell)$ & $3.24e5$ & $1.44e5$ & $3.50e4$ & $8.28e3$ & $1.85e3$ &          &           &  \\ \hline            
    511   & $n_\ell$       & $511^2$  & $255^2$  & $127^2$  & $63^2$   & $31^2$   &  $15^2$  &  76       & 6 \\
          & $\nnz(L^N_\ell)$ & $1.30e6$ & $5.82e5$ & $1.44e5$ & $3.50e4$ & $8.28e3$ & $1.85e3$ &           &     \\ \hline               
    \end{tabular}
    \end{small}
        \caption{Parameters and quantities for Example~\ref{ex:2dLaplace}}
    \label{tab:2dLaplace}
\end{table}

For the multigrid hierarchy we choose $P_\ell$ to be the standard bilinear interpolation from a grid of size $N_{\ell+1} \times N_{\ell +1}$ to one of size $N_\ell \times N_\ell$; see \cite{Trottenberg2000}. Here, the 
number $N_\ell$ of grid points in one dimension on level $\ell$ is recursively given as $N_{\ell+1} = \lfloor  N_{\ell}/2 \rfloor $. The restrictions $P_\ell$ are taken as the adjoints of the interpolations $P_\ell$, and the coarse grid operators $L^N_\ell$ are obtained as Galerkin approximations $L^N_{\ell+1} = R_\ell L^N_{\ell}P_\ell$. 
So the operator $L^N_\ell$ at level $\ell$ is an $n_\ell \times n_\ell$ matrix with $n_\ell = N_\ell^2$.

V-cycle multigrid with one step of Gauss-Seidel pre- and post-smoothing  was used as a solver to compute $(L^N_\ell)^{-1}x$ on the various levels. In the solver, we always use the full hierarchy down to a smallest coarsest grid operator size of $N^2 = 7^2$, where we inverted directly using a Cholesky factorization.   For multilevel Monte-Carlo we took $L$, the maximum number of levels, such that $N_L = 15$, since it turned out that with this choice the work for the direct computation of the trace at level $L$---requiring the inversion of a matrix of size $15^2 = 225$---was small enough compared to the other cost. Table~\ref{tab:2dLaplace} summarizes the most important quantities for the matrices in this example.

\begin{figure}[htbp]
    \includegraphics[width=.49\textwidth]{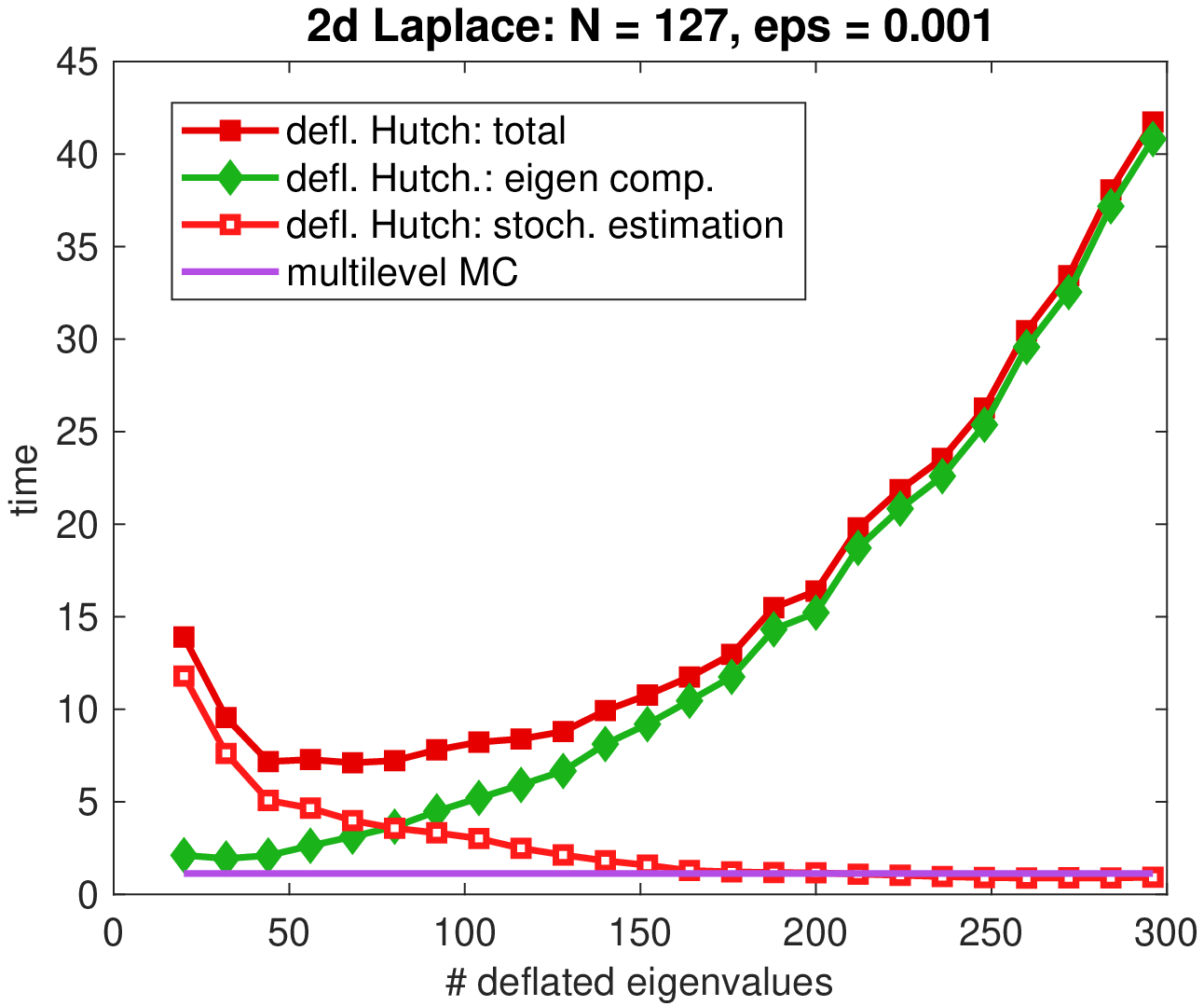} \hfill
    \includegraphics[width=.49\textwidth]{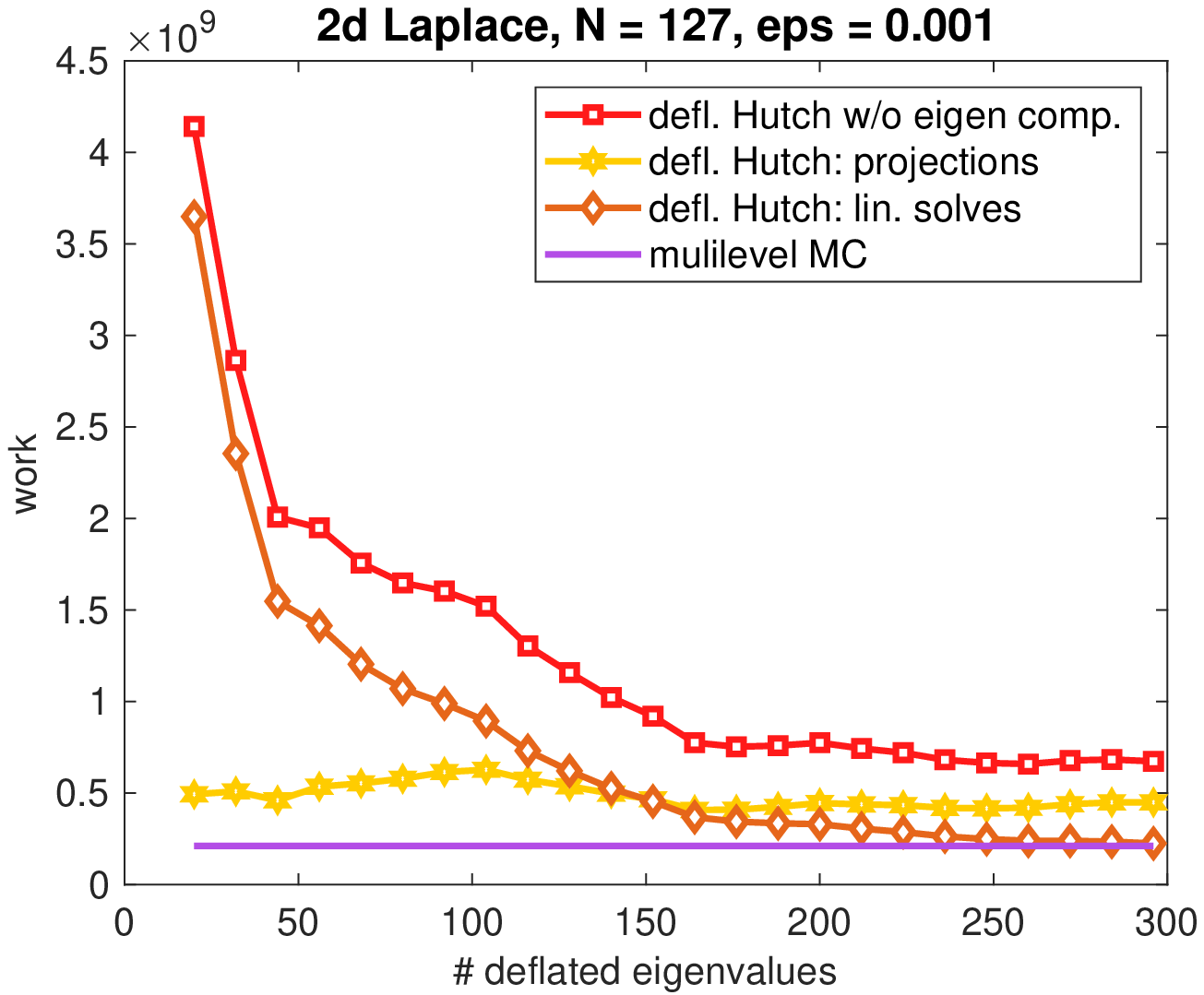}
  \centering
  \caption{2d Laplace: Comparison of execution times and arithmetic cost for multilevel Monte-Carlo and deflated Hutchinson, varying $n_\defl$. \label{fig:2dLaplace_timings}}
\end{figure}

Figure~\ref{fig:2dLaplace_timings} reports timings and arithmetic cost for a study to approximately find the time-optimal number of eigenvalues to deflate in the deflated Hutchinson method for $N=127$. The left part of the figure shows that there is a substantial decrease in the timings when we start to increase the number of deflated eigenvalues 
up to around 50, after which the cost for computing the eigenpairs becomes increasingly larger, eventually dominating the overall execution time completely. The straight horizontal line on the bottom indicates the time required in the multilevel Monte-Carlo approach using $L=4$ levels. Even with an optimal number of deflated eigenvalues, deflated Hutchinson takes about 7 times longer than multilevel Monte-Carlo. The right part of Figure~\ref{fig:2dLaplace_timings} reports the arithmetic cost. We do {\em not} include the cost for the computation of the eigenpairs, since we ignore the details of Matlab's \texttt{eigs} function that we use to compute those. The top line in the right part of the figure thus corresponds to the line marked with open squares in the left part, and we see that the cost model quite accurately matches the observed execution times. The two lines below the top line separate this work into the work spent in the computation of the projections $(I-UU^*)x = x - U(U^*x)$, which is $2nn_\defl$, and the work spent in the linear solves. The straight horizontal line represents the work for multilevel Monte-Carlo. We see that if a large number of eigenpairs is deflated, we have comparable work in the linear solves in deflated Hutchinson than in multilevel Monte-Carlo, due to the fact that we have to do a similar number of stochastic estimates. Each estimate, though, now becomes substantially more expensive due to the additional projections, so that we still see a factor of about 3 in favor of multilevel Monte-Carlo. We repeat that this comparison does not take the work for computing the eigenpairs in deflated Hutchinson into account.

\begin{figure}[htbp]
\includegraphics[width=.49\textwidth]{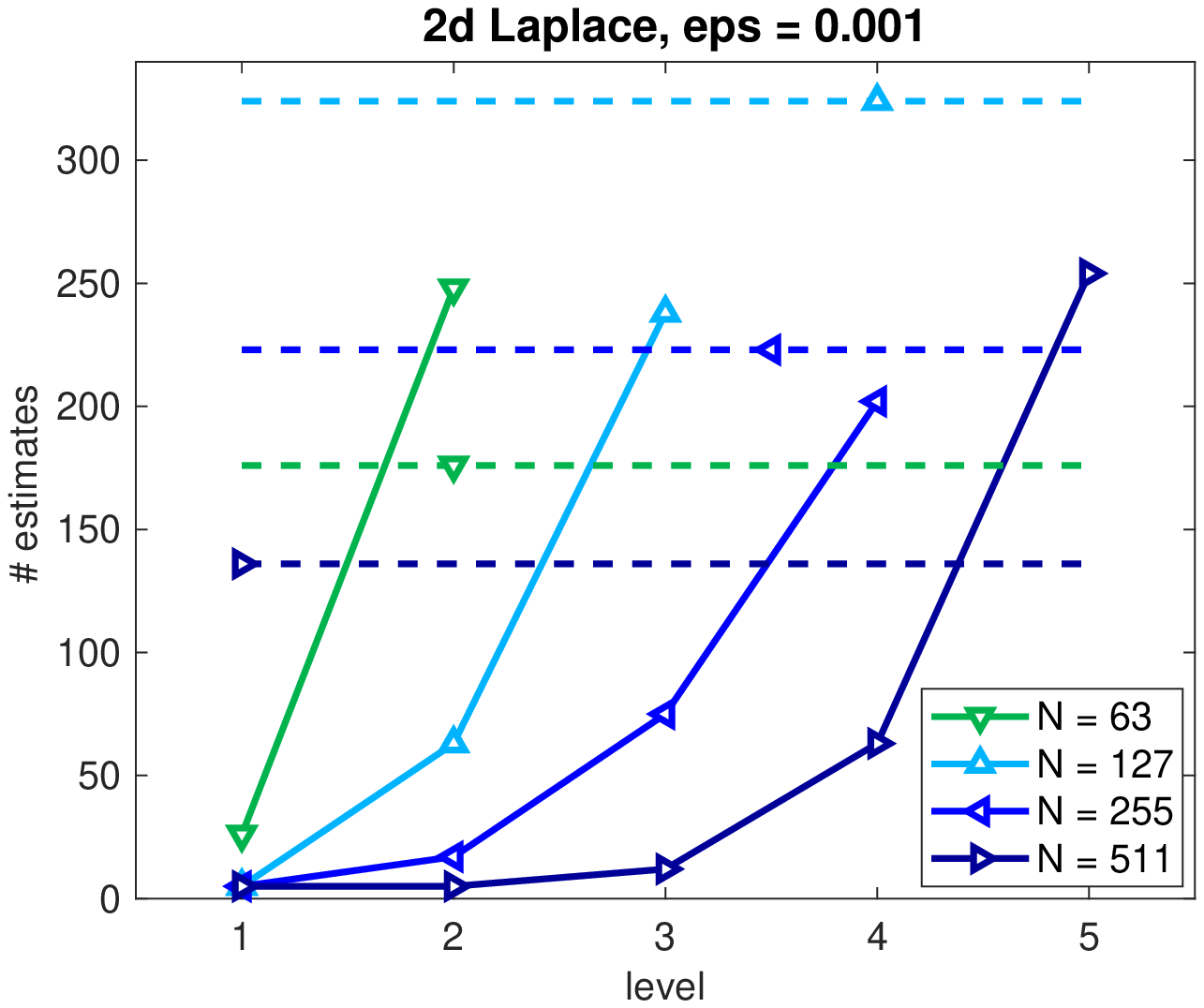} \hfill
\includegraphics[width=.49\textwidth]{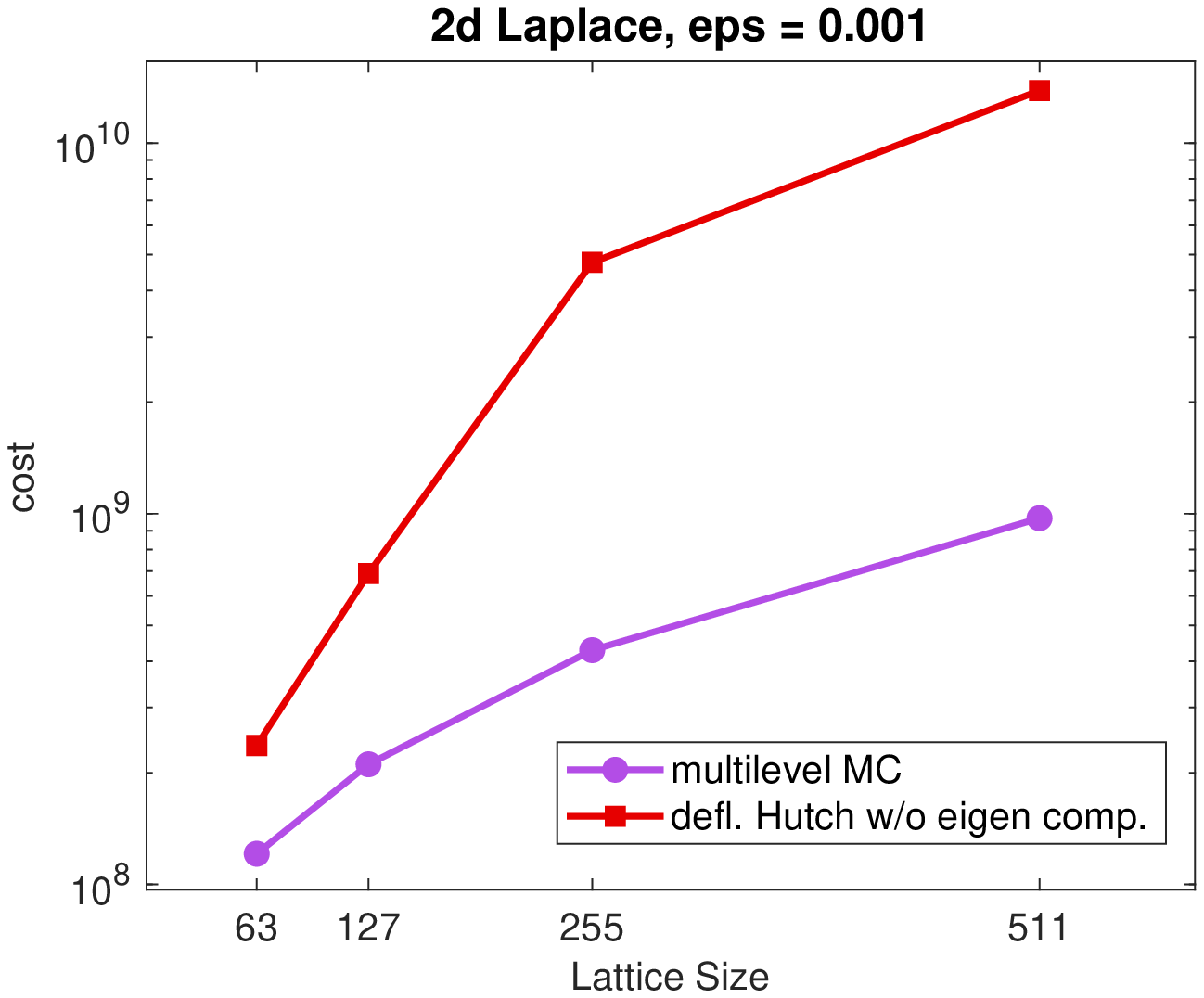}
  \caption{Comparison of multilevel Monte-Carlo and optimally deflated Hutchinson for the 2d Laplace matrix: no of stochastic estimates on each level difference \eqref{eq:trace_diff} and total work for different $N$. \label{fig:2dLaplace_estimates_work}}
\end{figure}

We proceed with Figure~\ref{fig:2dLaplace_estimates_work} which reports 
results for the four size parameters $N=63,127,255,511$. For multilevel Monte-Carlo, the left part of the figure shows the number of stochastic estimates that we perform for each of the differences $\hat{P}_\ell (L^N_\ell)^{-1}\hat{R}_\ell - \hat{P}_{\ell+1}(L^N_{\ell+1})^{-1}\hat{R}_{\ell+1}$ in \eqref{eq:trace_diff}. 
For comparison, the number of stochastic estimates required in (time optimally) deflated Hutchinson are indicated as dashed horizontal lines; see Table~\ref{tab:2dLaplace} for the values of $n_\defl$ that we used. 
The right part of Figure~\ref{fig:2dLaplace_estimates_work} shows the total amount of work invested in the different methods where, again, we do not count the work for the eigenpair computation in deflated Hutchinson. The figure illustrates the fact that in multilevel Monte-Carlo we do few estimates on the fine, expensive levels and more on the coarse and cheap levels. The plots for the cost in the right part show how this translates into a reduction
of the arithmetic cost, reaching about one order of magnitude for the larger values of $N$, even without accounting for the cost for the computation of the eigenpairs in deflated Hutchinson.

\begin{figure}[htbp]
\centerline{\includegraphics[width=.49\textwidth]{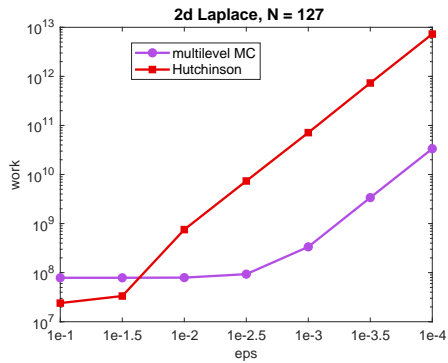}} 
  \caption{Arithmetic cost for multilevel Monte-Carlo and standard Hutchinson as a function of the target accuracy $\epsilon$. \label{fig:2dN127scaneps}}
\end{figure}

Our multilevel Monte-Carlo approach performs a Monte-Carlo estimate for each level difference. 
It therefore exhibits the same quadratic scaling with respect to the target accuracy $\epsilon$ 
as standard Hutchinson does. This is illustrated in Figure~\ref{fig:2dN127scaneps} for $N=127$. 
This is a logscale plot, and the quadratic dependence on $\epsilon^{-1}$ is clearly visible for 
the smaller values of $\epsilon$. For the larger values of $\epsilon$ (to the left), the 
results are less significant, since we always do a minimum of at least 5 stochastic estimates for each level difference. For $\epsilon = 10^{-1}$ and $\epsilon = 10^{-1.5}$, this is precisely what happens for all the level differences, so the cost for multilevel Monte-Carlo is exactly the same (and probably unnecessarily high) for both these values of $\epsilon$.  Note that the 
arithmetic cost of standard Hutchinson is between 2 and 3 orders of magnitude higher than that of multilevel Monte-Carlo for $\epsilon \leq 10^{-2}$.
\end{example}

\begin{example} \label{ex:gaugeLaplace} The gauge Laplacian $G^N$ is a modification of the standard 2d discrete Laplace matrix where now the coupling coefficients---called gauge links---are complex numbers of modulus one but with a random phase. The gauge Laplacian represents a first step towards the modeling of gauge field theories in physics, where the random coefficients model the fluctuating background gauge field. With $u_{ij}$ describing the variables belonging to a grid point $(ih,jh)$, $h=1/N$, $i,j = 0,\ldots,N-1$, the coupling described by one row of the gauge Laplacian $G^N \in \mathC^{N^2 \times N^2}$ is given as
\begin{eqnarray*}
&4 u_{ij} - e^{\i\Theta_{ij}}u_{i+1,j}-e^{\i\Phi_{ij}}u_{i,j+1} - e^{-\i\Theta_{i-1,j}}u_{i-1,j}-e^{-\i\Phi_{i,j-1}}u_{i,j-1},& \\  &i,j=0,\ldots,N-1,&
\end{eqnarray*}
where the indices are to be understood $\bmod N$ since we have periodic boundary conditions. Gauge Laplacians are Hermitian and positive semidefinite. Typically, as in our case, they are even positive definite.

\begin{table}
    \centering
    \begin{small}
    \begin{tabular}{|r|l|cccc|cc|}
    \hline
    \multicolumn{8}{|c|}{Gauge Laplace} \\
      \hline
     $N$  &                &$\ell = 1$&$\ell = 2$&$\ell = 3$&$\ell = 4$& $n_\defl$ & $L$ \\ \hline 
     64   & $n_\ell$       & $4096$   & $1354$   & $134$   &          &   60       & 3 \\
          & $\nnz(G^N_\ell)$ & $20480$ & $24900$ & $3172$ &          &          &           \\ \hline 
    128   & $n_\ell$       & $16384$  & $5440$   & $554$     &         &            60       & 3 \\
          & $\nnz(G^N_\ell)$ & $81920$ & $99448$ & $11300$ &        &          &           \\ \hline     
    256   & $n_\ell$       & $65536$  & $21802$  & $2348$   & $196$   &           20       & 4 \\
          & $\nnz(G^N_\ell)$ & $327680$ & $394628$ & $49416$ & $6352$ &          &            \\ \hline            
    512   & $n_\ell$       & $262144$  & $87296$  & $9562$  & $924$   &        20       & 4 \\
          & $\nnz(G^N_\ell)$ & $327680$ & $394628$ & $49416$ & $6352$ &          &            \\ \hline           
    \end{tabular}
    \end{small}
        \caption{Parameters and quantities for Example~\ref{ex:gaugeLaplace}}
    \label{tab:GaugeLaplace}
\end{table}

The Python package pyAMG, see \cite{OlSc2018}, contains functions to produce gauge Laplacians with prescribed distributions for the phases $\Theta_{ij}$ and $\Phi_{ij}$ of the gauge links. For our experiments, we produced gauge Laplacians with grid spacing $a=1$ and temperature $\beta = 0.009$ for the distribution of the phases of the gauge links. 

The pyAMG package provides a variety of algebraic multigrid methods. To build a multigrid hierarchy for gauge Laplacians, we use adaptive smoothed aggregation, where for the adaptive setup we took the parameters 
\[
\text{\texttt{num\_candidates} } =2, \text{\texttt{ candidate\_iters} } = 5, 
\text{\texttt{ improvement\_iters} } =8.
\]
As our linear solver we used V-cycle multigrid with one step of Gauss-Seidel pre- and post-smoothing. Table~\ref{tab:GaugeLaplace} summarizes 
the most important quantities for the same four lattice sizes as we had for the 2d Laplacian. The table shows that smoothed aggregation yields matrices at level 2 which are a factor 3 less in size but are significantly more dense, since they have even more non-zeros than the matrices at level 1. For the subsequent levels the coarsening is quite aggressive, reducing the sizes by factors of roughly 10, and similarly for the number of non-zeros. 

\begin{figure}[htbp]
\includegraphics[width=.49\textwidth]{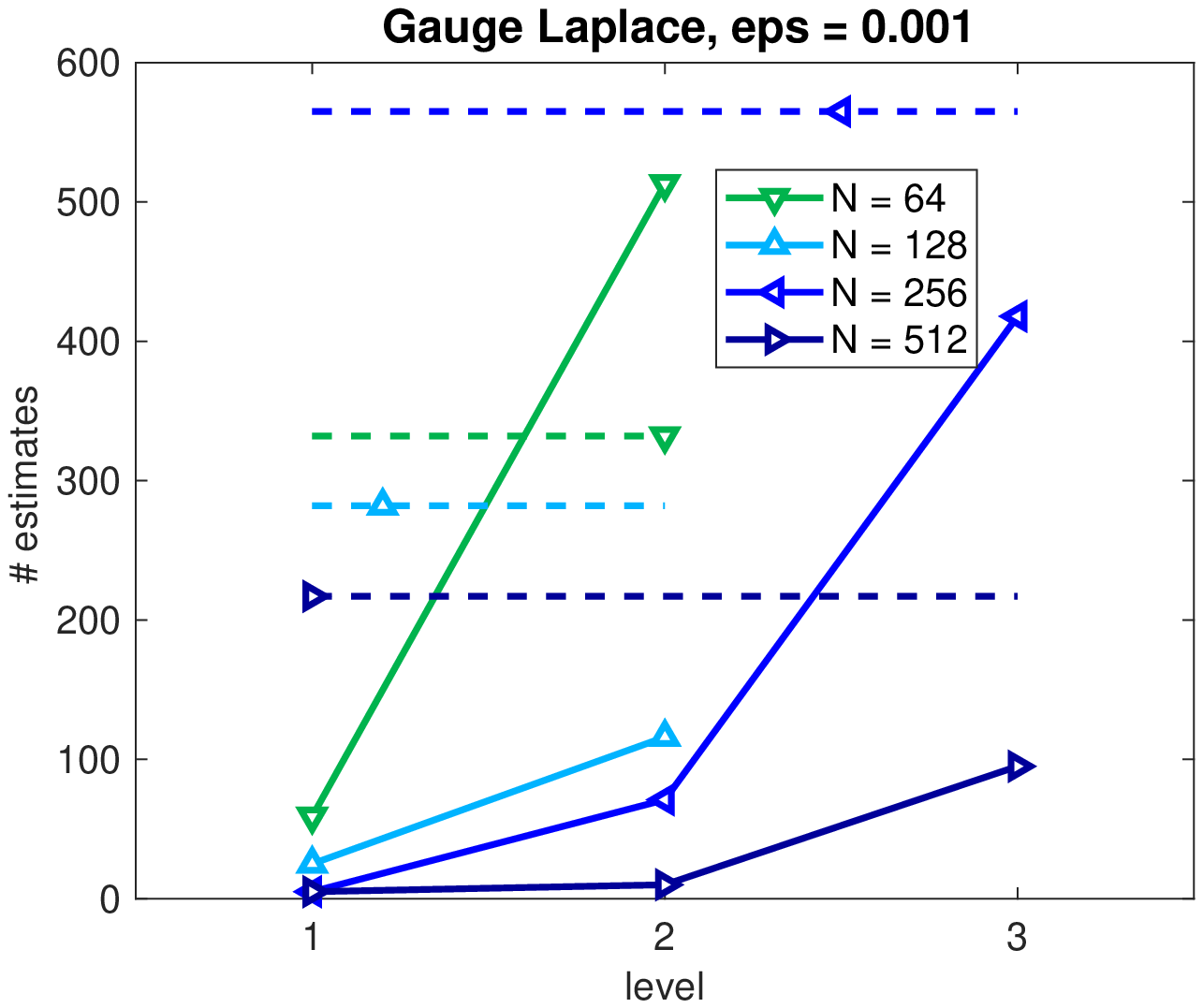} \hfill
\includegraphics[width=.49\textwidth]{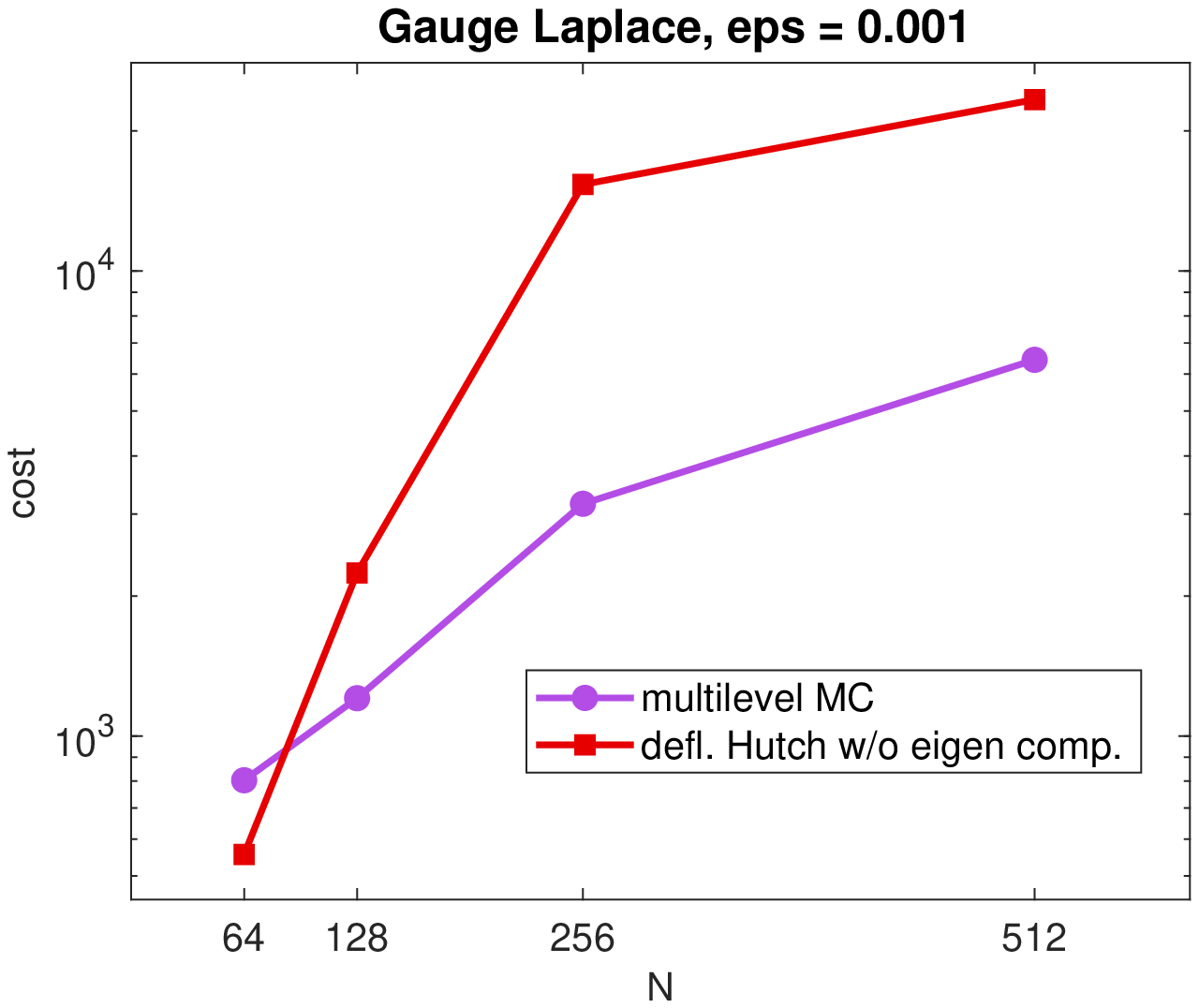}
  \caption{Comparison of multilevel Monte-Carlo and optimally deflated Hutchinson for the gauge Laplace matrices: no of stochastic estimates on each level difference \eqref{eq:trace_diff} and total work for different $N$. \label{fig:gaugeLaplace_estimates_work}}
\end{figure}

In the same manner as Figure~\ref{fig:2dLaplace_estimates_work} did for the 2d Laplacian, Figure~\ref{fig:gaugeLaplace_estimates_work} now reports the number of estimates at the various levels and the arithmetic cost for multilevel Monte-Carlo as well as for optimally deflated Hutchinson. 
 As for the first example the figure illustrates that multilevel Monte-Carlo allows to do few estimates on the fine levels. The gain in work reaches up to a factor of 5 for the larger values of $N$, whereas $N=64$ is too small in this example for multilevel Monte-Carlo to beat deflated Hutchinson. As before, we do not count the arithmetic cost for computing the eigenpairs in deflated Hutchinson in this comparison. 
\end{example}

\begin{example} \label{ex:Schwinger} Our third example is the Schwinger discretization of the 2-di\-men\-si\-onal Dirac operator \cite{Schwinger1962}. This operator describes quantum electrodynamics (QED), the quantum field theory of the electro-magnetic interaction between charged particles via photons. The Schwinger matrix resembles the gauge Laplacian in the sense that there is a periodic nearest neighbor coupling on an equidistant grid in the unit square and that the coupling coefficients are based on complex numbers of modulus 1 with a random phase. The difference is that for Dirac operators there are several (here: two) variables per grid point, representing different spins.  With the Pauli matrices
\[
\sigma_1 = \left[ \begin{array}{cc} 0 & 1 \\ 1 & 0 \end{array} \right], \enspace \sigma_2 = \left[ \begin{array}{cc} 0 & i \\ -i & 0 \end{array} \right],
\]
and the understanding that a grid variable $u_{ij}$ has now two components, i.e.\ $u_{ij} \in \mathC^2$, representing the different spins at grid point $(ih,jh)$,
the periodic couplings is now given as 
\begin{eqnarray} \label{eq:schwinger_coupling}
  (4+m)\cdot u_{ij} &-&  e^{\i\Theta_{ij}}(I-\sigma_1)u_{i+1,j}-e^{\i\Phi_{ij}}(I-\sigma_2)u_{i,j+1} \\
  &-& e^{-\i\Theta_{i-1,j}}(I+\sigma_1)u_{i-1,j}-e^{-\i\Phi_{i,j-1}}(I+\sigma_2)u_{i,j-1}, \\ 
  & &i,j=1,\ldots,N.
\end{eqnarray}

Note that the Pauli matrices cross-couple the spins. Thus, if we order spin components such that the first spin component
at each grid location comes first and all second spin components follow, the Schwinger matrix has the form
\begin{equation} \label{eq:matrix_Schwinger}
S^N = \left[ \begin{array}{cc} G^N & B \\ -B^* & G^N \end{array} \right]
\end{equation}
where the matrices $G^N$ are gauge Laplacians and the matrix $B$ represents the spin cross-coupling. 

We used a Schwinger matrix arising from a thermalized configuration within a Markov process. This guarantees that the random gauge links obey a Boltzmann distribution with a given temperature parameter. The matrix belongs to an $N \times N$ lattice with $N = 128$, and is thus of size $2N^{2}{\times}2N^{2} = 32,768 \times 32,768$.

The multigrid hierarchy for the Schwinger matrix is obtained through an aggregation based approach similar to the one 
typically used for the 4-dimensional Wilson-Dirac operator; see \cite{MGClark2010_1,FroKaKrLeRo13}. Giving all 
details would be beyond the scope of this paper, so here is a rough sketch: At each level, the operator represents a 
periodic nearest neighbor coupling an a 2-dimensional lattice of decreasing size. At each lattice site we have 
several, $d$ say, degrees of freedom (dofs), i.e.\ variables belonging to a lattice site are vectors of length $d$. 
When going from one level to the next, we subdivide the lattice into small sublattices---the aggregates. Each aggregate becomes a single lattice site on the next level. The corresponding restriction operator is obtained by computing (quite inexact) approximations to the $d$ smallest eigenvectors, the components of which are assembled over the aggregates and orthogonalized. This gives 
restriction operators which are orthonormal, and since we take the prolongations to be their adjoints, we are in the simplified situation of Remark~\ref{rem:simplified} for estimating the traces of the differences in multilevel Monte-Carlo.  

The Schwinger matrix is not Hermitian, but its eigenvalues come in complex conjugate pairs. This is due to a non-trivial symmetry induced by the spin structure that can be seen from \eqref{eq:matrix_Schwinger},
\[
 J S^N = \left(S^N\right)^*  J, \enspace \mbox{ where } J = \left[ \begin{array}{cc} I & 0 \\ 0 & -I \end{array} \right],
\]
so that to each right eigenpair $(x,\lambda)$ of $S^N$ corresponds a left eigenpair $((Jx)^*, \bar{\lambda})$. 
This spin symmetry can be preserved on the coarser levels if one doubles the dofs; see \cite{MGClark2010_1,FroKaKrLeRo13}.

We built a multigrid hierarchy with four levels. For the aggregates, at all levels we always aggregated $4 \times 4$ sublattices into one lattice site on the next level, and we started with 2 dofs for the second level and 4 for all remaining levels. Those dofs are then doubled because we implemented the spin structure preserving approach. 
Table~\ref{tab:Schwinger} summarizes the most important information on the multigrid hierarchy. It also shows the five
different (negative) values for the mass $m$ that we used in our experiments. These values are physically meaningful, 
and for all of them the spectrum of $S^N$ is contained in the right half plane. As $m$ becomes smaller, 
$S^N$ 
becomes more ill-conditioned, so the work for each stochastic estimate increases. When solving linear systems at the 
various levels, we used one V-cycle of multigrid with two steps of Gauss-Seidel pre- and post-smoothing as a 
preconditioner for flexible GMRES \cite{Saad2003}. Our implementation was done in Python\footnote{The programs can be found in the GitHub repository https://github.com/Gustavroot/MLMCTraceComputer}. 

\begin{table}
    \centering
    \begin{small}
    \begin{tabular}{|r|l|cccc|c|}
    \hline
    \multicolumn{7}{|c|}{Schwinger model} \\
      \hline
     $N$  &                &$\ell = 1$     &$\ell = 2$    &$\ell = 3$    &$\ell = 4$    & $L$ \\ \hline 
     128   & $n_\ell$      & $2\cdot 128^2$& $4\cdot 32^2$& $8\cdot 8^2$ & $8\cdot2^2$  & 4 \\
          & $\nnz(S^N_\ell)$ & $2.94e5$       & $1.64e5$     & $2.46e4$     & $1024$       &  \\ \hline \hline
    $m$   & \multicolumn{1}{c}{$-0.1320$}      & $-0.1325$     & $-0.1329$    & $-0.1332$    &  $-0.1333$   &   \\
$n_\defl$ & \multicolumn{1}{c}{$384$}  & $384$         & $512$        & $512$        & 512          &      \\ \hline     
    \end{tabular}
    \end{small}
        \caption{Parameters and quantities for Example~\ref{ex:Schwinger}}
    \label{tab:Schwinger}
\end{table}

\begin{figure}[htbp]
\includegraphics[width=.49\textwidth]{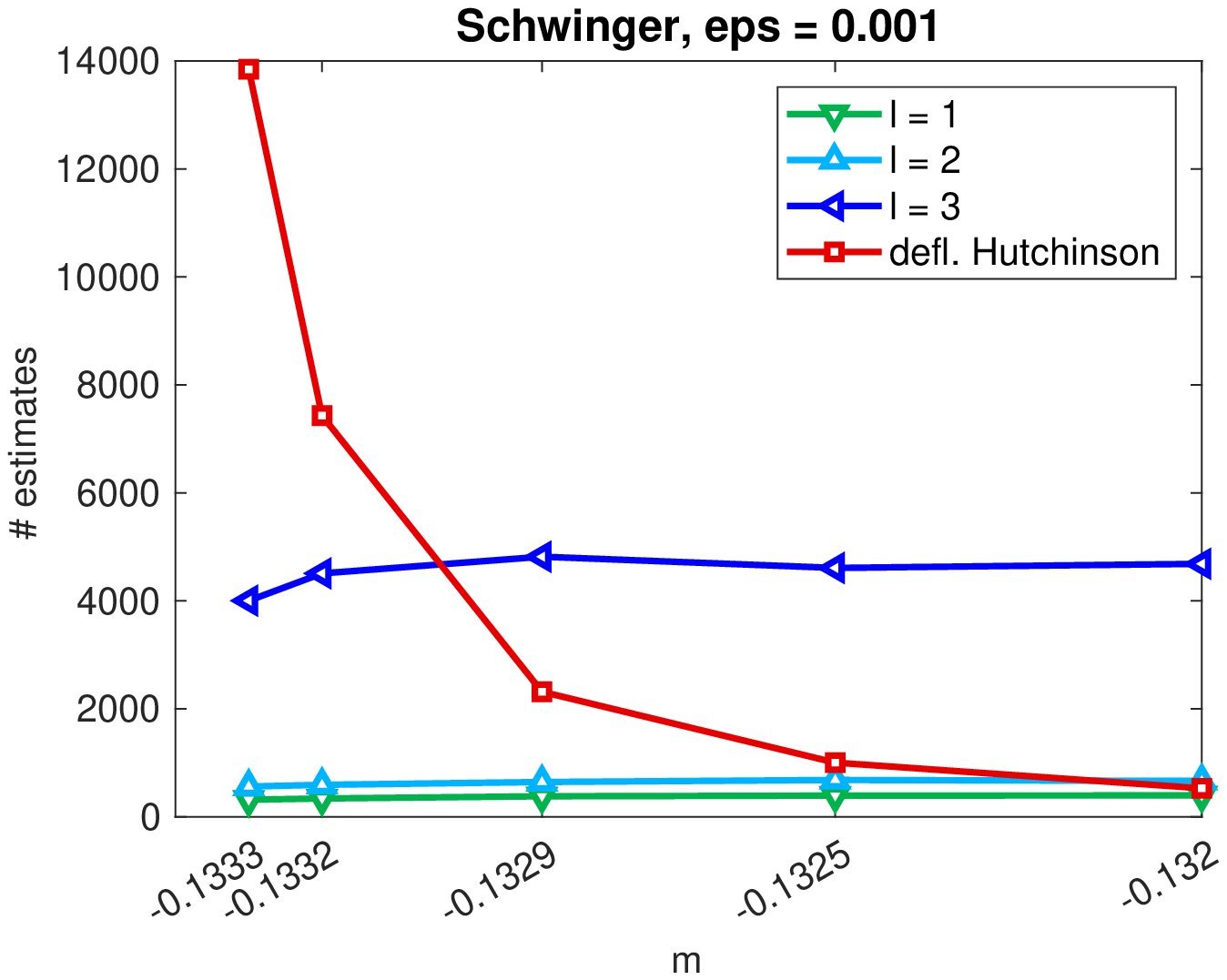} \hfill
\includegraphics[width=.49\textwidth]{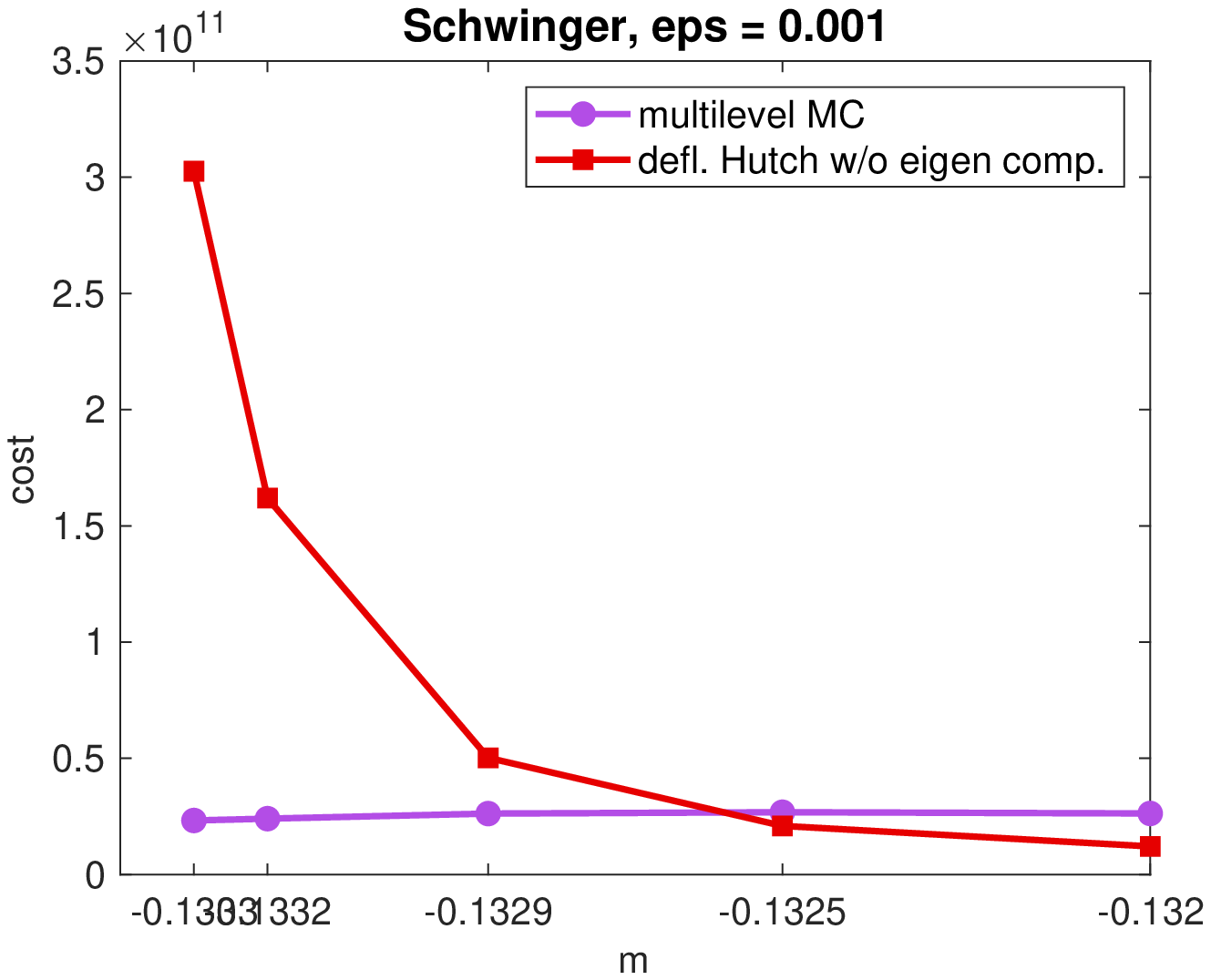}
  \caption{Multilevel Monte-Carlo and deflated Hutchinson for the Schwinger matrix: no of stochastic estimates on each level difference \eqref{eq:trace_diff} and total work for different masses $m$. \label{fig:Schwinger_estimates_work}}
\end{figure}

Figure~\ref{fig:Schwinger_estimates_work} shows our results. We tuned the required root mean square deviation $\rho_\ell$ at each level due to the observation that this time the root mean square deviation is comparably small on the last level difference. The values we chose, independently of the mass parameter $m$, are  $\rho_{1}=\sqrt{0.4}\epsilon\tau$, $\rho_{2}=\sqrt{0.55}\epsilon\tau$ and $\rho_{3} = \sqrt{0.05}\epsilon\tau$ for all masses.

As in the other examples, we compare against deflated Hutchinson with 
a time-optimal number of deflated eigenpairs, and we do not count the work for the eigenpair computation. The figure shows that multilevel Monte-Carlo becomes increasingly more efficient over deflated Hutchinson as the masses become smaller, ending up in a one order of magnitude improvement in work for the smallest. Interestingly, we also see that the number of stochastic estimates to be performed on each level in multilevel Monte-Carlo depends on the masses only very mildly, whereas the number of stochastic estimates increase rapidly in deflated Hutchinson.   
\end{example}

\section*{Conclusion} We presented a novel multilevel Monte-Carlo approach to stochastically estimate the trace of 
a matrix. The method relies on the availability of a multigrid hierarchy and estimates traces of differences of matrices at the various levels. The method is efficient if the variances at the earlier, finer level differences, are small, and this is what we observed in three different examples, two of which used an adaptive algebraic approach to determine the multigrid hierarchy.

\section*{Acknowledgments}
We would like to thank Francesco Knechtli and Tomasz Korzec for stimulating discussion including references on work done in the lattice QCD  community, and Karsten Kahl for providing us with the thermalized Schwinger model matrix.

\bibliographystyle{siamplain}
\bibliography{Paper_mlmc_trace}
\end{document}